\newcommand{\thickhline}{%
    \noalign {\ifnum 0=`}\fi \hrule height 1pt
    \futurelet \reserved@a \@xhline
}
\newcolumntype{"}{@{\hskip\tabcolsep\vrule width 1pt\hskip\tabcolsep}}
\definecolor{blues1}{RGB}{198, 219, 239} 
\definecolor{blues2}{RGB}{158, 202, 225} 
\definecolor{blues3}{RGB}{107, 174, 214} 
\definecolor{blues4}{RGB}{49, 130, 189} 
\definecolor{blues5}{RGB}{8, 81, 156} 
\definecolor{blues6}{RGB}{2, 34, 78} 
\numberwithin{equation}{section} 
\numberwithin{table}{section} 
\numberwithin{figure}{section} 
\newtheorem{definition}{Definition}[] 
\newtheorem{theorem}{Theorem}[]
\newtheorem{corollary}{Corollary}[] 
\newtheorem{remark}{Remark}[section] 
\newtheorem{assumption}{Assumption}[]
\theoremstyle{definition}
\newtheorem{experiment}{Experiment}[]
\newcommand{\m}[1]{\mathbf{#1}}
\newcommand{\R}{\mathbb{R}} 
\newcommand{\N}{\mathbb{N}} 
\newcommand{\bsx}{\boldsymbol{x}}
\newcommand{\bsy}{\boldsymbol{y}}
\newcommand{\1}{\mathbf{1}} 
\pgfplotsset{compat=1.18}
\begin{document}

\title{Efficient sparse probability measures recovery via Bregman gradient}


\author{Jianting Pan         \and
        Ming Yan 
}




\maketitle

\begin{abstract}
    This paper presents an algorithm tailored for the efficient recovery of sparse probability measures incorporating $\ell_0$-sparse regularization within the probability simplex constraint. Employing the Bregman proximal gradient method, our algorithm achieves sparsity by explicitly solving underlying subproblems. We rigorously establish the convergence properties of the algorithm, showcasing its capacity to converge to a local minimum with a convergence rate of $O(1/k)$ under mild assumptions. To substantiate the efficacy of our algorithm, we conduct numerical experiments, offering a compelling demonstration of its efficiency in recovering sparse probability measures.\\
    \textbf{Keywords} $\ell_0$-sparse regularization, Probability simplex constraint, Bregman proximal gradient
\end{abstract}

\section{Introduction}

In this paper, we focus on solving the following sparse optimization problem with the $\ell_0$ regularization and the probability simplex constraint:
\begin{equation}
\label{opt}
\begin{aligned}
     \min_{\boldsymbol{x}\in \R^n}\quad   &f(\boldsymbol{x}) + \lambda \|\boldsymbol{x}\|_0 \\
     \mbox{subject to} \quad  & {\mathbf{1}_n}^\top\boldsymbol{x} = 1, \boldsymbol{x} \geq 0, 
\end{aligned}
\end{equation}
where $f: \R^n \rightarrow (-\infty, \infty]$ is proper, continuously differentiable and convex, $\lambda > 0$ is a regularization parameter, ${\mathbf{1}_n} \in \R^n$ is the vector with all ones, and $\boldsymbol{x} \geq 0$ indicates that all elements in $\boldsymbol{x}$ are nonnegative. The $\ell_0$ ``norm'' of a vector $\boldsymbol{x}$ counts the number of nonzero elements in $\boldsymbol{x}$. This problem encompasses various applications, including sparse portfolio optimization \cite{bertsimas2022scalable,yin2015novel,cura2009particle} and sparse hyperspectral unmixing \cite{salehani2014sparse,tang2014sparse,esmaeili2016l,majumdar2016impulse,rogass2014reduction,zou2018restoration}. For hyperspectral unmixing, the loss function $f(x)$ depends on the type of noise, which could be Gaussian noise~\cite{salehani2014sparse,tang2014sparse,esmaeili2016l}, impulse noise~\cite{majumdar2016impulse}, stripe noise~\cite{rogass2014reduction}, or Poisson noise~\cite{zou2018restoration}. 
Examples of $f(x)$ include the quadratic loss~\cite{salehani2014sparse} for the Gaussian noise, the Huber loss~\cite{guo2021modified} for the impulse noise, and the KL divergence for the Poisson noise~\cite{zou2018restoration}.

Various approaches are available for solving optimization problems with the $\ell_0$ term. The iterative hard-thresholding (IHT) algorithm was proposed for $\ell_0$-regularized least squares problems~\cite{blumensath2008iterative,blumensath2009iterative}. When the simplex constraint is incorporated, the paper~\cite{zhang2019learning} proposed an algorithm based on IHT and established its convergence properties to learn sparse probability measures. However, algorithms based on IHT require strong assumptions, such as mutual coherence~\cite{fornasier2015compressive} and restricted isometry condition~\cite{blumensath2009iterative}.

Due to the NP-hard nature of the $\ell_0$ term~\cite{natarajan1995sparse}, computationally feasible methods based on the $\ell_1$ norm, e.g., Lasso \cite{tibshirani1996regression}, have been introduced for problems without the simplex constraint. Bioucas-Dias and Figueiredo~\cite{bioucas2010alternating} proposed the SUnSAL algorithm, which applies the alternating direction method of multipliers (ADMM) to solve the following $\ell_1$ regularized problem: 
\begin{equation}
\begin{aligned}
\label{opt: SUnSAL}
    \min_{\boldsymbol{x}\in\R^n} \frac{1}{2}\|\m{A} \boldsymbol{x}-\boldsymbol{b}\|^2 + \lambda \|\boldsymbol{x}\|_1\quad \text{ subject to } ~ \boldsymbol{x}\geq 0,
\end{aligned}
\end{equation}
Note, the simplex constraint may not be satisfied here.


Other alternative terms were used besides the $\ell_0$ and $\ell_1$ terms. E.g., an iteratively reweighted algorithm based on the logarithm smoothed function was proposed in \cite{tang2014sparse}. The paper~\cite{esmaeili2016l} presented an algorithm based on ADMM to solve the following problem:
\begin{equation}
    \min_{\boldsymbol{x}\in\R^n} \frac{1}{2}\|\m{A} \boldsymbol{x}-\boldsymbol{b}\|^2 + \lambda F(\sigma,\boldsymbol{x})\quad \text{ subject to } { \mathbf{1}_n}^\top\boldsymbol{x}=1,~ \boldsymbol{x}\geq 0,
\end{equation}
where $\m{A} \in \mathbb{R}^{m \times n},~\boldsymbol{b} \in \R^m$,~$\lambda >0$ is a regularized parameter, and $F(\sigma,\boldsymbol{x}) = g(\sigma) \sum\limits_{i=1}^n \arctan (\sigma x_i)$ with $\sigma > 0$. The function $g(\sigma)$ is chosen such that $F(\sigma,\boldsymbol{x})$ tends to $\|\boldsymbol{x}\|_0$ as $\sigma \rightarrow \infty$.  
The paper~\cite{xiao2022geometric} used the $\ell_{1/2}$ norm regularization and solves the following equivalent problem:
\begin{equation}
    \begin{aligned}
         \min_{\boldsymbol{y}\in\R^n}   \frac{1}{2}\|\m{A} (\boldsymbol{y} \odot \boldsymbol{y})-\boldsymbol{b}\|_2^2+ \lambda \|\boldsymbol{y}\|_1\quad \text{subject to }   \boldsymbol{y}^\top \boldsymbol{y} = 1,
    \end{aligned}
\end{equation}
where $\lambda > 0$ is a regularized parameter and the symbol `$\odot$' means the Hadamard product of two vectors. This paper introduced a geometric proximal gradient (GPG) method to solve the above problem. By considering the $\ell_1$ norm and the constraints in the proximal mapping, GPG is essentially a proximal gradient method.

Although approximation models offer computational advantages, they may not precisely capture the solution of the original $\ell_0$-based model~\cite{zhang2023sparse}. Notably, an increasing body of research based on the $\ell_0$ term has recently emerged and attracted significant attention due to their remarkable recovery properties. In this context, noteworthy contributions have been made, such as normalized IHT and improved IHT~\cite{blumensath2010normalized,pan2017convergent}. Furthermore, to expedite convergence rates, various second-order algorithms rooted in the $\ell_0$ term, incorporating Newton-type steps, have been proposed \cite{zhang2023sparse,zhao2022lagrange,zhou2021global}. Despite the NP-hardness of the problem, the utilization of the $\ell_0$ term still has gained prominence in the realm of selecting sparse features.

This paper employs the Bregman proximal gradient (BPG) method to solve~\eqref{opt} and provides its theoretical guarantee. One of the primary challenges in solving~\eqref{opt} lies in projecting the solution onto the probabilistic simplex set. To tackle this challenge, we leverage BPG, allowing fast iterations by designing a suitable Bregman divergence (such as relative entropy, detailed in Section~\ref{algorithm} or Itakura-Saito distance). This choice mitigates computational burdens and reduces per-iteration complexity, facilitating effective convergence. Instead of enforcing a fixed number of elements to be zero, as done in methods like IHT~\cite{blumensath2008iterative,blumensath2009iterative}, we add a $\ell_0$ term and give an explicit expression of the global solution of the subproblem in each BPG iteration. The number of nonzero elements in each iteration adjusts according to the current iteration and the regularization parameter, providing greater flexibility than methods with a fixed number of nonzero elements. We establish the global convergence of our proposed algorithm and prove that the generated sequence converges to a local minimizer with the rate $O(1/k)$. Furthermore, while prior research predominantly relied on smoothed $\ell_0$ term \cite{xiao2022geometric}, our numerical results demonstrate that our proposed algorithm can achieve more accurate outcomes within a shorter timeframe.


\textbf{Notation.} Through this paper, we use bold lower letters for vectors, bold capital letters for matrices, and regular lower letters for scalars. The regular letter with a subscript indicates the corresponding element of the vector, e.g., $x_1$ is the first element of the vector $\boldsymbol{x}$. Let $\R^{m \times n}$ be the set of all $m \times n$ real matrices and $\R^n$ be equipped with the Euclidean inner product $\langle \cdot, \cdot \rangle$. The symbol `$\odot$' represents the Hadamard product of two vectors. We denote $\|\cdot\|_p$ as the $\ell_p$ norm of a vector. For simplicity, we use $\|\cdot\|$ to denote the Euclidean norm. For any $\boldsymbol{x} = (x_1,x_2,...,x_n)^\top \in \R^n$ and any set $I$, let $|I|$ denote the number of the elements in the set $I$ and $\text{supp}(\boldsymbol{x}) := \{i \in [n] : x_i \neq 0\}$, where $[n] := \{1,2,...,n\}$.

\section{The Proposed Algorithm}
We first introduce the standard BPG in Subsection~\ref{algorithm}. When applying BPG to our problem~\eqref{opt} in Section~\ref{Our proposed algorithm}, we must solve a subproblem with the $\ell_0$ term. Then, we propose a method to solve the subproblem analytically in Subsection~\ref{analytical solution}. We show that our BPG algorithm for solving the problem~\eqref{opt} converges in a finite number of iterations in Subsection~\ref{Convergence Analysis}. 

\subsection{Introduction to the Bregman proximal gradient}
\label{algorithm}
The Bregman proximal gradient (BPG) method, also known as mirror descent (MD) \cite{bauschke2017descent,eckstein1993nonlinear,bolte2018first,ben2001ordered,beck2003mirror,nemirovskij1983problem}, solves the following optimization problem
\begin{equation}
    \min_{\boldsymbol{x} \in C}\ f(\boldsymbol{x}),
\end{equation}
where $C$ is a closed convex set and the objective function $f$ is proper and continuously differentiable.

Let $h$ be a strictly convex function that is differentiable on an open set containing the relative interior of $C$~\cite{boyd2004convex}, which is denoted as $\mbox{rint}(C)$. For $\boldsymbol{y}\in \mbox{rint}(C)$, the Bregman divergence generated by $h$ is defined as 
\begin{equation}
    D_h(\boldsymbol{x},\boldsymbol{y}) = h(\boldsymbol{x}) - h(\boldsymbol{y}) - \langle \nabla h(\boldsymbol{y}), \boldsymbol{x}-\boldsymbol{y} \rangle,
\end{equation}
where $\boldsymbol{x}\in \text{dom}(h)$.
\begin{definition}
\label{Lsmooth}
    The function $f$ is called $L$-smooth relative to $h$ on $C$ if there exists $L>0$ such that, for $\boldsymbol{x}\in C$ and $\boldsymbol{y}\in \mbox{rint}(C)$,
    \begin{equation}
    \label{Lsmooth def}
        f(\boldsymbol{x}) \leq f(\boldsymbol{y})+\langle\nabla f(\boldsymbol{y}), \boldsymbol{x}-\boldsymbol{y}\rangle+L D_h(\boldsymbol{x}, \boldsymbol{y}).
    \end{equation}
\end{definition}
The definition of relative smoothness provides an upper bound for $f(\boldsymbol{x})$. If $f$ is $L$-smooth relative to $h$ on $C$, BPG updates the estimate of $\boldsymbol{x}$ via solving the following problems:
\begin{equation}
\label{general BPG}
    \boldsymbol{x}^{k+1} \in \arg \min_{\boldsymbol{x} \in C} \left( f(\boldsymbol{x}^k) + \langle \nabla f(\boldsymbol{x}^k),\boldsymbol{x}-\boldsymbol{x}^k \rangle + \frac{1}{\alpha}D_h(\boldsymbol{x},\boldsymbol{x}^k) \right),
\end{equation}
where $0 < \alpha  < 1/L$. 

The Bregman divergence generated by $h(\boldsymbol{x}) = \frac{1}{2}\|\boldsymbol{x}\|^2$ is the squared Euclidean distance $D_h(\boldsymbol{x},\boldsymbol{y}) = \frac{1}{2}\|\boldsymbol{x}-\boldsymbol{y}\|^2$, and the corresponding algorithm is the standard proximal gradient algorithm. A proper Bregman divergence can exploit optimization problems' structure~\cite{beck2003mirror} and reduce the per-iteration complexity. BPG has demonstrated numerous advantages in terms of computational efficiency in solving constrained optimization problems~\cite{bauschke2017descent,auslender2006interior,lu2018relatively,ben2001ordered,ma2011fixed,krichene2015accelerated,jiang2023bregman}. 

One of the most intriguing examples occurs when $C$ represents the probabilistic simplex set~\cite{ben2001ordered}. In this context, the proximal map becomes straightforward to compute when we utilize $h(\boldsymbol{x}) = \sum_{i=1}^n x_i \log x_i$ with the convention $0\log 0 = 0$ to generate the Bregman divergence. The Bregman divergence associated with such $h$ is
\begin{equation}
\label{KL divergence}
    D_h(\boldsymbol{x},\boldsymbol{y}) = \sum_{i=1}^n \left( x_i \log \left( \frac{x_i}{y_i} \right) - x_i + y_i \right),
\end{equation}
which is also known as KL-divergence or relative entropy. Under the simplex set constraint, the update \eqref{general BPG} admits the closed-form solution:
\begin{equation}
\label{closed form}
    x_i^{k+1} = \frac{x_i^k e^{-\alpha \nabla_{x_i} f(\boldsymbol{x}^k)}}{\sum_{j= 1}^n x_j^k e^{-\alpha \nabla_{x_j} f(\boldsymbol{x}^k)}}\quad  \forall\ i=1,2,...,n.
\end{equation}
The update of $\boldsymbol{x}^{k+1}$ in~\eqref{closed form} is much faster than the projection to the simplex set in the standard projected gradient descent. 

When $f$ is convex, BPG has a $O(1/k)$ convergence rate~\cite{bauschke2017descent,birnbaum2011distributed,lu2018relatively}. 
The paper~\cite{hanzely2021accelerated} proposed an accelerated Bregman proximal gradient method (ABPG) and ABPG with gain adaptation (ABPG-g), which have a faster convergence rate than BPG. Algorithm \ref{ABPG} presents the ABPG-g algorithm. According to~\cite{hanzely2021accelerated}, when applied under the probabilistic simplex set constraint with KL-divergence as the Bregman divergence and worked with intrinsic triangle scaling exponent $\gamma = 2$~\cite[Definition 3]{hanzely2021accelerated}, ABPG-g demonstrates an empirical convergence rate of $O(1/k^{2})$.

\begin{algorithm}
    \caption{ABPG with gain adaptation (ABPG-g)} 
    \label{ABPG} 
    \begin{algorithmic}
        \REQUIRE $\boldsymbol{z}^0 = \boldsymbol{x}^0 \in C,\ \gamma >1,\ \rho>1,\ \theta_0 = 1,\ G_{-1} = 1$, $G_{\text{min}} > 0$, $k = 0$, and $\varepsilon_1>0$.
        \REPEAT
        \STATE $G_k = \max\{G_{k-1}/\rho, G_{\text{min}}\}$
        \REPEAT
        \IF{$k > 0$} 
        \STATE compute $\theta_k$ by solving $\frac{1-\theta_k}{G_k \theta_k^{\gamma}} = \frac{1}{G_{k-1}\theta_{k-1}^{\gamma}}$. 
        \ENDIF
        \STATE $\boldsymbol{y}^k = (1-\theta_k)\boldsymbol{x}^k + \theta_k \boldsymbol{z}^k$
        \STATE $\boldsymbol{z}^{k+1} = \arg \min\limits_{\boldsymbol{z} \in C}\left\{f(\boldsymbol{y}^k) + \langle \nabla f(\boldsymbol{y}^k),\boldsymbol{z} - \boldsymbol{y}^k \rangle + G_k \theta_k^{\gamma-1} LD_h (\boldsymbol{z},\boldsymbol{z}^k)\right\}$
        \STATE $\boldsymbol{x}^{k+1} = (1-\theta_k)\boldsymbol{x}^k + \theta_k \boldsymbol{z}^{k+1}$
        \STATE $G_k \mapsfrom G_k \rho$
        \UNTIL{$f(\boldsymbol{x}^{k+1}) \leq f(\boldsymbol{y}^k) + \langle \nabla f(\boldsymbol{y}^k),\boldsymbol{x}^{k+1} - \boldsymbol{y}^k \rangle + G_k \theta_k^{\gamma} LD_h (\boldsymbol{z}^{k+1},\boldsymbol{z}^k)$}
        \STATE $k \mapsfrom k + 1$
        \UNTIL{$|f(\bsx^{k+1}) - f(\bsx^{k})| < \varepsilon_1$}
    \end{algorithmic} 
\end{algorithm}

\subsection{Applying BPG to problem~\eqref{opt}}
\label{Our proposed algorithm}

To begin with, it is important to emphasize that the iterates~\eqref{closed form} generated by BPG never reside on the boundary, i.e., ${x}^k_i \neq 0$ as long as the initial ${x}^0_i\neq 0$ for any index $i \in [n]$. However, these iterates may converge to the boundary without a sparse penalty term. We let the $\ell_0$ term be the penalty term to achieve sparsity during the iteration. Extensive research has demonstrated the effectiveness of the $\ell_0$ term in driving the iterates towards sparse solutions. Notably, the $\ell_0$ term exhibits stronger sparsity characteristics than alternative terms~\cite{xu2011image}, motivating us to employ it. In conclusion, by incorporating the $\ell_0$ term into the BPG algorithm, we aim to solve the following subproblem:
$$
\boldsymbol{x}^{k+1} \in \arg \min_{{\mathbf{1}_n}^\top\boldsymbol{x} = 1} \left( f(\boldsymbol{x}^k) + \langle \nabla f(\boldsymbol{x}^k),\boldsymbol{x}-\boldsymbol{x}^k \rangle + \frac{1}{\alpha}D_h(\boldsymbol{x},\boldsymbol{x}^k) + \lambda \|\boldsymbol{x}\|_0\right),
$$
where $\boldsymbol{x}^k$ is the current iterate, $D_h(\bsx,\bsx^k)$ is the KL-divergence between $\bsx$ and $\bsx^k$, $\alpha>0$ and $\lambda>0$ are two constants. 
Based on the above analysis, our algorithm for solving problem \eqref{opt} can be described as follows in Algorithm~\ref{alg}.

\begin{algorithm}
	\caption{Our proposed algorithm for~\eqref{opt}} 
	\label{alg} 
	\begin{algorithmic}
		\REQUIRE $\lambda, \varepsilon_2$, $\alpha > 0$, $k = 0$.
        \STATE \textbf{(Initialization)}: Use \textbf{ABPG-g} (Algorithm \ref{ABPG}) to attain $\boldsymbol{x}^0$ (note: $\|\boldsymbol{x}^0\|_0 = n$).
        \REPEAT
        \STATE \textbf{(L0BPG)}: Update $\boldsymbol{x}^{k+1}$ via
        \begin{equation}
        \label{subproblem}
        \boldsymbol{x}^{k+1} \in \arg \min_{{\1_n^\top}\boldsymbol{x} = 1} \left( f(\boldsymbol{x}^k) + \langle \nabla f(\boldsymbol{x}^k),\boldsymbol{x}-\boldsymbol{x}^k \rangle + \frac{1}{\alpha}D_h(\boldsymbol{x},\boldsymbol{x}^k) + \lambda \|\boldsymbol{x}\|_0\right).
        \end{equation}
        \STATE $k \mapsfrom k + 1$
        \UNTIL{$f(\bsx^k) + \lambda \|\bsx^k\|_0 - f(\bsx^{k+1}) - \lambda \|\bsx^{k+1}\|_0 < \varepsilon_2$}
	\end{algorithmic} 
\end{algorithm}


\begin{remark}
\label{initialization phase}
At the initialization phase of Algorithm \ref{alg}, we utilize ABPG-g to obtain a proper starting point $\boldsymbol{x}^0$. As demonstrated in \cite{hanzely2021accelerated}, ABPG-g exhibits an empirical convergence rate of $O(1/k^{2})$, which is notably faster than the convergence rate of $O(k^{-1})$ observed in BPG~\cite{birnbaum2011distributed,bauschke2017descent}. Consequently, we can establish an appropriate starting point via ABPG-g more expeditiously than BPG. Furthermore, it's important to note that we do not achieve a sparse solution during this initialization phase, i.e., $\|\boldsymbol{x}^0\|_0 = n$. 
\end{remark}

\subsection{Analytical solution to the subproblem~\eqref{subproblem}}
\label{analytical solution}

The following theorem provides a way to find a global solution to the subproblem \eqref{subproblem}. 

\begin{theorem}
\label{global solution}
Let 
\begin{equation}
\begin{aligned}
\label{yk+1}
\boldsymbol{y}^{k+1} =\arg \min_{{\mathbf{1}_n}^\top \boldsymbol{x} = 1} \left( f(\boldsymbol{x}^k) + \langle \nabla f(\boldsymbol{x}^k),\boldsymbol{x}-\boldsymbol{x}^k \rangle + \frac{1}{\alpha}D_h(\boldsymbol{x},\boldsymbol{x}^k) \right)  
\end{aligned}
\end{equation}
and
$$
d \in \arg \min_{m \in [n]} -\frac{1}{\alpha}\log\sum_{i=1}^m y^{k+1}_{(i)} + \lambda m,
$$
where $y^{k+1}_{(i)}$ represents the $i$-th largest element of $\boldsymbol{y}^{k+1}$,i.e. $y^{k+1}_{(1)} \geq y^{k+1}_{(2)} \geq ... \geq y^{k+1}_{(n)}$, then we obtain a global solution to the subproblem \eqref{subproblem} as
\begin{equation}
\label{zk+1}
\begin{aligned}
x_i^{k+1} = \left\{
    \begin{aligned}
    & \frac{y^{k+1}_i}{\sum_{j \in I_{k+1}} y^{k+1}_j}, & \quad \mbox{if }i \in I_{k+1},\\
    & 0, & \quad otherwise,
    \end{aligned}
\right.
\end{aligned}
\end{equation}
where $I_{k+1}$ is the set of the indices of the first $d$ largest entries of $\boldsymbol{y}^{k+1}$. 
\end{theorem}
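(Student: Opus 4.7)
The plan is to reduce the combinatorial-looking subproblem~\eqref{subproblem} to a one-dimensional search over the cardinality $m$ of the support by first conditioning on the support set. Writing $S = \mathrm{supp}(\boldsymbol{x})$ with $|S| = m$, I would observe that the $\ell_0$ penalty becomes the constant $\lambda m$ once $S$ is fixed, and the remaining problem is the familiar KL projection of the linearized $f$ onto the simplex restricted to coordinates in $S$. A Lagrange multiplier argument (exactly as in the derivation of~\eqref{closed form}, but summing only over $S$) then produces the closed form
$$x_i = \frac{x_i^k e^{-\alpha \nabla_i f(\boldsymbol{x}^k)}}{Z_S}, \qquad i \in S, \quad \text{with } Z_S := \sum_{j \in S} x_j^k e^{-\alpha \nabla_j f(\boldsymbol{x}^k)}.$$

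Next I would substitute this minimizer back into the objective. Using $\log(x_i/x_i^k) = -\alpha \nabla_i f(\boldsymbol{x}^k) - \log Z_S$ for $i \in S$, the term $\langle \nabla f(\boldsymbol{x}^k), \boldsymbol{x}\rangle$ cancels exactly the $-\sum_{i \in S} x_i \nabla_i f(\boldsymbol{x}^k)$ piece coming out of $\frac{1}{\alpha} \sum_{i \in S} x_i \log(x_i/x_i^k)$. With the convention $0 \log 0 = 0$, the indices $i \notin S$ contribute $x_i^k$ apiece to $D_h(\boldsymbol{x},\boldsymbol{x}^k)$, and the $-x_i$ and $+x_i^k$ pieces inside $D_h$ telescope against $\sum_{i\in S} x_i = 1$ and $\sum_i x_i^k = 1$. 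The value at the optimum therefore collapses to
$$f(\boldsymbol{x}^k) - \langle \nabla f(\boldsymbol{x}^k), \boldsymbol{x}^k\rangle - \frac{1}{\alpha} \log Z_S + \lambda m,$$
where the first two terms are independent of $S$. Invoking the definition~\eqref{yk+1} of $\boldsymbol{y}^{k+1}$ to write $Z_S = Z_{[n]} \sum_{i \in S} y_i^{k+1}$, the only $S$-dependent piece becomes $-\frac{1}{\alpha}\log \sum_{i \in S} y_i^{k+1} + \lambda m$.

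Finally I would minimize over $S$ in two stages: for a fixed cardinality $|S| = m$, maximizing $\sum_{i \in S} y_i^{k+1}$ forces $S$ to consist of the indices of the $m$ largest entries of $\boldsymbol{y}^{k+1}$, yielding $\sum_{i=1}^m y^{k+1}_{(i)}$. The remaining scalar minimization over $m \in [n]$ is precisely the definition of $d$ in the statement, and the proposed formula~\eqref{zk+1} for $\boldsymbol{x}^{k+1}$ is recovered from the closed form above specialized to $S = I_{k+1}$ (noting that $x_i^{k+1} = y_i^{k+1}/\sum_{j\in I_{k+1}} y_j^{k+1}$ because both $y_i^{k+1}$ and $Z_S$ share the common factor $Z_{[n]}$).

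The main obstacle I expect is the careful bookkeeping in the substitution step: the KL divergence must be split between indices inside and outside the tentative support with $0\log 0 = 0$, and one must verify that all the gradient-dependent terms and all the mass terms cancel so that only $-\frac{1}{\alpha}\log Z_S + \lambda m$ survives as an $S$-dependent quantity. A secondary subtlety is coordinates with $x_i^k = 0$: such indices force $y_i^{k+1} = 0$ as well as $x_i^{k+1} = 0$ by finiteness of $D_h(\cdot, \boldsymbol{x}^k)$, so they are automatically excluded from $I_{k+1}$ and the ranking argument is unaffected.
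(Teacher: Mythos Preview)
Your proposal is correct and follows essentially the same route as the paper: condition on the support $S$, solve the resulting KL-projection in closed form, substitute back to see that the $S$-dependent part of the objective is $-\frac{1}{\alpha}\log\sum_{i\in S}y_i^{k+1}+\lambda|S|$, then optimize first over supports of fixed size (pick the $m$ largest $y_i^{k+1}$) and finally over $m$. The paper packages the substitution step via the shorthand $g(x_i)=x_i\nabla_{x_i}f(\boldsymbol{x}^k)+\tfrac{1}{\alpha}x_i\log(x_i/x_i^k)$ rather than tracking the $-x_i+x_i^k$ terms of $D_h$ explicitly as you do, but the computation and the conclusion are the same.
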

\begin{proof}
Denote $g(x_i) = x_i \nabla_{x_i}f(\boldsymbol{x}^k) + \frac{x_i}{\alpha}\log\left(\frac{x_i}{x_i^k}\right)$. Then the subproblem \eqref{subproblem} can be rewritten as
\begin{equation}
\label{two-layer}
\begin{aligned}
    & \min_{{\mathbf{1}_n}^\top \boldsymbol{x} = 1}  f(\boldsymbol{x}^k) + \langle \nabla f(\boldsymbol{x}^k),\boldsymbol{x}-\boldsymbol{x}^k \rangle + \frac{1}{\alpha}D_h(\boldsymbol{x},\boldsymbol{x}^k) + \lambda \|\boldsymbol{x}\|_0\\
    = & \min_{{\mathbf{1}_n}^\top \boldsymbol{x} = 1 }\sum_{i=1}^n g(x_i) + \lambda \|\boldsymbol{x}\|_0 + c \\
    = & \min_{m \in [n]}\ \min_{{\mathbf{1}_n}^\top \boldsymbol{x} = 1, \|\boldsymbol{x}\|_0 = m} \sum_{i=1}^n g(x_i) + \lambda m + c\\
    = & \min_{m \in [n]}\  \min_{{\mathbf{1}_n}^\top \boldsymbol{x} = 1, \|\boldsymbol{x}\|_0 = m} \sum_{i \in \text{supp}(\boldsymbol{x})} g(x_i) + \lambda m + c,
\end{aligned}
\end{equation}
where $c=f(\boldsymbol{x}^k)-\langle\nabla f(\boldsymbol{x}^k),\boldsymbol{x}^k\rangle$. The last equality holds since $g(0) = 0$. 

Let's consider the inner minimization problem first
\begin{equation}
\label{min1}
 \min_{{\mathbf{1}_n}^\top \boldsymbol{x} = 1, \|\boldsymbol{x}\|_0 = m} \sum_{i \in \text{supp}(\boldsymbol{x})} g(x_i)= \min_{|I| = m}\quad \min_{{\mathbf{1}_n}^\top \boldsymbol{x} = 1,~\text{supp}(\boldsymbol{x})= I} \sum_{i \in I} g(x_i).
\end{equation}
There are $C_{n}^{m}$ possible ways to choose the support $I$ of $m$ elements from the $n$ elements. For each fixed support of $m$ elements, we can find the optimal $\boldsymbol{x}$ analytically. Then, the problem becomes finding the support of $m$ elements with the smallest function value from those possible ways. 
Given the support $I$ of $m$ elements for $\boldsymbol{x}$, we can solve the problem analytically as below:
\begin{equation}
\label{possible xk+1}
x_{m,i}^{k+1} = \left\{
    \begin{aligned}
    & \frac{x_i^k e^{-\alpha \nabla_{x_i} f(\boldsymbol{x}^k)}}{\sum_{j \in I} x_j^k e^{-\alpha \nabla_{x_j} f(\boldsymbol{x}^k)}}, & \quad \mbox{if } i \in I,\\
    & 0, & \quad \mbox{otherwise.}
    \end{aligned}\right.
\end{equation}
We plug this solution into the objective function in~\eqref{min1} and obtain 
\begin{align*}
    \sum_{i \in I} g(x_{m,i}^{k+1})=-\frac{1}{\alpha}\log  \sum_{i \in I} x_i^k e^{-\alpha \nabla_{x_i} f(\boldsymbol{x}^k)}.
\end{align*}

Note that the optimization problem in~\eqref{yk+1} gives that 
$$
{y}_i^{k+1}=\frac{{x_i}^k e^{-\alpha \nabla_{x_i} f(\boldsymbol{x}^k)}}{\sum_{j=1}^n{x}^k_j  e^{-\alpha \nabla_{x_j} f(\boldsymbol{x}^k)}}.
$$
Therefore, the objective function in~\eqref{min1} becomes 
\begin{align*}
    \sum_{i \in I} g(x_{m,i}^{k+1})=-\frac{1}{\alpha}\log  \sum_{i \in I} y_i^{k+1}-\frac{1}{\alpha}\log \sum_{j=1}^n{x}^k_j  e^{-\alpha \nabla_{x_j} f(\boldsymbol{x}^k)}.
\end{align*}
Thus, we must choose the indices for the $m$ largest elements from $\boldsymbol{y}^{k+1}$. 

Since the inner optimization problem in~\eqref{two-layer} can be solved analytically, the subproblem~\eqref{subproblem} reduces to finding the number $m$ by solving the problem 
\begin{align}
    \min_{m \in [n]}\ -\frac{1}{\alpha}\log  \sum_{i=1}^m {y}^{k+1}_{(i)}+ \lambda m  -\frac{1}{\alpha}\log \sum_{j=1}^n{x}^k_j  e^{-\alpha \nabla_{x_j} f(\boldsymbol{x}^k)}+ c,
\end{align}
which is equivalent to 
$$d \in \arg \min_{m \in [n]} -\frac{1}{\alpha}\log\sum_{i=1}^m {y}^{k+1}_{(i)} + \lambda m.$$
After we find the number $d$, we choose the indices as the largest $d$ elements from $\boldsymbol{y}^{k+1}$, then we construct $\boldsymbol{x}^{k+1}$ based on the equation~\eqref{zk+1}. 
\end{proof}

Based on Theorem \ref{global solution}, we can solve the problem \eqref{subproblem} by Algorithm \ref{solve subproblem}. Given that there may be two choices for $d$, we opt to select the larger one, i.e.,
\begin{equation}
    d_{k+1} := \max \left\{ \arg \min_{m \in [n]} -\frac{1}{\alpha}\log\sum_{i=1}^m {y}^{k+1}_{(i)} + \lambda m\right\}.
\end{equation}

\begin{algorithm}
	\caption{Algorithm to solve the problem~\eqref{subproblem}} 
	\label{solve subproblem} 
	\begin{algorithmic}
        \STATE \textbf{(BPG step)}: Update $\boldsymbol{y}^{k+1}$ via
        \begin{equation}
        \label{BPG}
        \boldsymbol{y}^{k+1} = \arg \min_{{\mathbf{1}_n}^\top \boldsymbol{y} = 1} \left( f(\boldsymbol{x}^k) + \langle \nabla f(\boldsymbol{x}^k),\boldsymbol{y}-\boldsymbol{x}^k \rangle + \frac{1}{\alpha}D_h(\boldsymbol{y},\boldsymbol{x}^k) \right).
        \end{equation}
        \STATE \textbf{(Sorting step)}: Find $d_{k+1}$ such that
        \begin{equation}
        \label{sorting}
        d_{k+1} = \max \left\{\arg \min_{m \in [n]} -\frac{1}{\alpha}\log\sum_{i=1}^m {y}^{k+1}_{(i)} + \lambda m \right\},
        \end{equation}
        where we order the elements of $\boldsymbol{y}^{k+1}: y_{(1)}^{k+1} \geq y_{(2)}^{k+1} \geq ... \geq y_{(n)}^{k+1}$. 
        \STATE \textbf{(Removing step)}: Update $\boldsymbol{x}^{k+1}$ by
        \begin{equation}
        \label{removing}
            x_i^{k+1} = \left\{
                \begin{aligned}
                & \frac{y^{k+1}_i}{\sum_{j \in I_{k+1}} y^{k+1}_j}, & \quad \text{for }i \in I_{k+1},\\
                & 0, & \quad \text{otherwise},
                \end{aligned}
            \right.
        \end{equation}
        where $I_{k+1}$ is the set of the indices of the first $d_{k+1}$ largest entries of $\boldsymbol{y}^{k+1}$. 
	\end{algorithmic} 
\end{algorithm}

\begin{remark}
    From the sorting step~\eqref{sorting} and the removing step~\eqref{removing}, it is evident that $|I_{k+1}| = d_{k+1}$. Moreover, based on the closed-form expression of $\bsy^{k+1}$, it is straightforward to observe that the sequence $\{d_k\}_{k \in \N}$ is nonincreasing and $I_{k+1} \subseteq I_k$. Hence, once $x^{k+1}_i$ becomes $0$ for some $i \in [n]$ in the removing step~\eqref{removing}, it cannot be positive again. 
\end{remark}

\begin{remark}
    It also indicates the importance of the initialization phase of Algorithm~\ref{alg} with high accuracy, i.e. small $\varepsilon_1 (=10^{-6},10^{-7})$. Firstly, high accuracy would be more likely to preserve important elements. With high precision, the elements in $\bsx$ change very little. Most elements will be close to $0$ and not be in the ground truth support set $I^*$. Therefore, setting these elements to $0$ will not affect our search for the support set. Secondly, high accuracy can help accelerate the convergence. It would set many insignificant elements to $0$ at the first removing step. Hence, we attain a much lower dimensional optimization problem and accelerate the convergence. We also emphasize that the more complex (heavier noise) the problem is, the higher the accuracy is needed.
\end{remark}
Let's denote 
\begin{equation}
l(m) = -\frac{1}{\alpha}\log\sum_{i=1}^m y^{k+1}_{(i)} + \lambda m. 
\end{equation}
In the sorting step \eqref{sorting}, calculating $l(m)$ from $m = 1$ to $m = n$ can be time-consuming. However, it is unnecessary to compute $l(m)$ for all $n$ values because the following theorem shows that $l(m)$ decreases first and then increases when $m$ increases from 1 to $n$.


\begin{theorem}
\label{lambda bound}
    In the sorting step \eqref{sorting}, $l(m)$ is monotonically decreasing for $m \in \{1,2,...,d_{k+1}-1\}$, and monotonically increasing for $m \in \{d_{k+1},d_{k+1}+1,...,n\}$. 
\end{theorem}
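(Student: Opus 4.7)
The plan is to study the forward differences $\Delta(m) := l(m+1) - l(m)$ and show they are nondecreasing in $m$, which makes the sequence $l(1), l(2), \ldots, l(n)$ unimodal. Since $d_{k+1}$ is chosen as the largest minimizer, this pins down the turning point and gives exactly the claimed behavior.

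First I would write, with $S_m := \sum_{i=1}^m y^{k+1}_{(i)}$,
\begin{equation*}
\Delta(m) \;=\; -\frac{1}{\alpha}\log\!\left(1 + \frac{y^{k+1}_{(m+1)}}{S_m}\right) + \lambda.
\end{equation*}
Note $S_m > 0$ for all $m \geq 1$ because $\boldsymbol{y}^{k+1}$ sums to $1$, so the expression is well defined. Since the entries are sorted as $y^{k+1}_{(1)} \geq y^{k+1}_{(2)} \geq \cdots \geq y^{k+1}_{(n)} \geq 0$, the numerator $y^{k+1}_{(m+1)}$ is nonincreasing in $m$ while the denominator $S_m$ is nondecreasing in $m$. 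Hence the ratio $y^{k+1}_{(m+1)}/S_m$ is nonincreasing in $m$, and by monotonicity of $\log$ the quantity $-\tfrac{1}{\alpha}\log(1 + y^{k+1}_{(m+1)}/S_m)$ is nondecreasing in $m$. Therefore $\Delta(m)$ is nondecreasing in $m$.

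With $\Delta$ nondecreasing, the sequence $\{l(m)\}_{m=1}^n$ is discrete-convex: there is a (possibly not unique) index at which $\Delta$ changes sign, and the set of minimizers forms a contiguous block. Because $d_{k+1}$ is defined as the \emph{largest} element of $\arg\min_{m\in[n]} l(m)$, we have $\Delta(m) \leq 0$ for every $m \leq d_{k+1}-1$ and $\Delta(m) \geq 0$ (in fact strictly positive at $m = d_{k+1}$ unless $d_{k+1}=n$) for every $m \geq d_{k+1}$. Translating back, $l$ is monotonically (non-strictly) decreasing on $\{1,2,\ldots,d_{k+1}\}$ and monotonically increasing on $\{d_{k+1},d_{k+1}+1,\ldots,n\}$, which is the assertion of the theorem.

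The argument is mostly routine; the only subtle point is the handling of ties. If $\Delta(m) = 0$ on some interval then $l$ is constant there, and the whole interval lies in $\arg\min l$. The convention of selecting $d_{k+1}$ as the maximum argmin resolves this ambiguity and guarantees that the "increasing phase" starts precisely at $d_{k+1}$, so the stated split into $\{1,\ldots,d_{k+1}-1\}$ and $\{d_{k+1},\ldots,n\}$ is consistent.
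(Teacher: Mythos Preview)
Your proof is correct and follows essentially the same approach as the paper: both compute the forward difference $l(m+1)-l(m) = -\tfrac{1}{\alpha}\log\bigl(1 + y^{k+1}_{(m+1)}/S_m\bigr) + \lambda$, observe that it is nondecreasing in $m$ because the ordered entries $y^{k+1}_{(m+1)}$ are nonincreasing while $S_m$ is nondecreasing, and then read off the unimodal shape of $l$ with turning point $d_{k+1}$. Your handling of ties via the maximum-argmin convention matches the paper's remark as well.
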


\begin{proof}
    We check the difference between two successive values $l(m+1)-l(m)$. For $m=1,\dots,n-1$, we have 
    \begin{align*}l(m+1)-l(m)=&-\frac{1}{\alpha}\log\sum_{i=1}^{m+1} y^{k+1}_{(i)}+\frac{1}{\alpha}\log\sum_{i=1}^m y^{k+1}_{(i)}+\lambda\\
    =&-{1\over \alpha}\log\left(1+{y^{k+1}_{(m+1)}\over \sum_{i=1}^{m} y^{k+1}_{(i)}}\right)+\lambda.
    \end{align*}
    Since $y^{k+1}_{(m)}$ is nonincreasing, we have that $l(m+1)-l(m)$ increases as $m$ increases from $1$ to $\hat{k}$, where $\hat{k}$ is the number such that $y_{(\hat{k})}^{k+1} = 0$. Therefore, in the sorting step~\eqref{sorting}, we let the smallest $m$ such that $l(m+1)-l(m)$ is positive be the solution $d_{k+1}$. In this case, $l(m)$ is increasing for $m\geq d_{k+1}$. Note that we have $l(d_{k+1})-l(d_{k+1}-1)\leq 0$ and $l(d_{k+1}-1)-l(d_{k+1}-2)<0$, therefore, $l(m)$ is decreasing for $m\leq d_{k+1}-1$. 
    It could happen that $l(d_{k+1})=l(d_{k+1}-1)$, and in this case, we choose the larger number $d_{k+1}$ as we mentioned in Algorithm~\ref{solve subproblem}.   
\end{proof}


Based on the previous theorem, we determine $d_{k+1}$ as the smallest $m$ such that $l(m+1)>l(m)$, that is
\begin{equation}\label{findm}
    e^{\alpha \lambda} -1> \frac{ y_{(m+1)}^{k+1}}{\sum_{i=1}^m y_{(i)}^{k+1}}=\frac{ y_{(m+1)}^{k+1}}{1-\sum_{i=m+1}^n y_{(i)}^{k+1}}.
\end{equation}
We can choose to check the inequality starting from $m=1$ or $m=d_k$ depending on $\boldsymbol{y}^{k+1}$ values.

\subsection{Convergence analysis of Algorithm~\ref{alg}}
\label{Convergence Analysis}
Throughout this subsection, we have the following assumption on $f$.
\begin{assumption}
\label{convex assumption}
    $f:\R^n \rightarrow (-\infty,\infty]$ is proper, continuously differentiable, and convex. In addition, $f$ is $L$-smooth relative to $h$. 
\end{assumption}

For simplicity, we denote $F(\boldsymbol{x}) = f(\boldsymbol{x})+\lambda \|\bsx\|_0$. 


\begin{theorem}\label{decrease thm}
    \textbf{(Descent property)} Under Assumption~\ref{convex assumption}, let $\{\boldsymbol{x}^k\}_{k \in \N}$ be the sequence generated by Algorithm~\ref{alg} with $0 < \alpha < 1/L$, then the sequence $\{F(\boldsymbol{x}^k)\}_{k \in \N}$ is nonincreasing and converges. The support $\{\textnormal{supp}\{\boldsymbol{x}^k\}\}_{k \in \N}$ converges in a finite number of iterations, i.e., $\exists M>0$ such that $\textnormal{supp}(\boldsymbol{x}^k) = I \subset [n]$ for $\forall k \geq M$. 
\end{theorem}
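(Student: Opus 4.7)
The plan is to first establish the descent inequality $F(\boldsymbol{x}^{k+1}) \le F(\boldsymbol{x}^k)$ by combining the relative smoothness upper bound for $f$ with the optimality of $\boldsymbol{x}^{k+1}$ in the subproblem \eqref{subproblem}, and then leverage the nesting property $I_{k+1}\subseteq I_k$ recorded in the preceding remark to conclude that the support stabilizes after finitely many iterations.

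For the descent step, I would first apply Definition~\ref{Lsmooth} to obtain
\begin{equation*}
f(\boldsymbol{x}^{k+1}) \le f(\boldsymbol{x}^k) + \langle \nabla f(\boldsymbol{x}^k),\boldsymbol{x}^{k+1}-\boldsymbol{x}^k\rangle + L\,D_h(\boldsymbol{x}^{k+1},\boldsymbol{x}^k),
\end{equation*}
and since $0<\alpha<1/L$ and $D_h\ge 0$, the right-hand side is bounded by the same expression with $L$ replaced by $1/\alpha$. Next, because $\boldsymbol{x}^k$ is itself feasible for \eqref{subproblem} (it lies on the simplex and $D_h(\boldsymbol{x}^k,\boldsymbol{x}^k)=0$), the optimality of $\boldsymbol{x}^{k+1}$ as a global minimizer of \eqref{subproblem} guaranteed by Theorem~\ref{global solution} gives
\begin{equation*}
\langle \nabla f(\boldsymbol{x}^k),\boldsymbol{x}^{k+1}-\boldsymbol{x}^k\rangle + \tfrac{1}{\alpha}D_h(\boldsymbol{x}^{k+1},\boldsymbol{x}^k) + \lambda\|\boldsymbol{x}^{k+1}\|_0 \le \lambda\|\boldsymbol{x}^k\|_0.
\end{equation*}
Adding $f(\boldsymbol{x}^k)+\lambda\|\boldsymbol{x}^{k+1}\|_0$ to both sides and combining with the smoothness inequality above immediately yields $F(\boldsymbol{x}^{k+1})\le F(\boldsymbol{x}^k)$. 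I would also observe that $F$ is bounded below on the probability simplex because $f$ is continuous on the compact set and $\lambda\|\boldsymbol{x}\|_0\ge 0$, so the nonincreasing real sequence $\{F(\boldsymbol{x}^k)\}$ converges.

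For the finite-step convergence of the support, I would invoke the remark following Algorithm~\ref{solve subproblem}, which states that $\{d_k\}$ is nonincreasing and $I_{k+1}\subseteq I_k$. Since $d_k=|I_k|$ takes values in the finite set $\{0,1,\dots,n\}$, the integer sequence must eventually be constant: there exists $M$ and $d^\star\in[n]$ such that $d_k=d^\star$ for all $k\ge M$. Combined with the inclusion $I_{k+1}\subseteq I_k$ and equality of cardinalities, this forces $I_{k+1}=I_k$ for all $k\ge M$, so $\mathrm{supp}(\boldsymbol{x}^k)=I$ is constant for some fixed $I\subset[n]$ thereafter.

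The only subtle point, and the one I would be most careful about, is verifying that $\boldsymbol{x}^k$ is a genuine feasible comparator in the subproblem: after the removing step some coordinates of $\boldsymbol{x}^k$ become exactly zero, and one must check that $D_h(\boldsymbol{x}^k,\boldsymbol{x}^k)=0$ is consistent with the convention $0\log 0=0$ and that the implicit nonnegativity constraint inherited from $\mathrm{dom}(h)$ is respected by both $\boldsymbol{x}^k$ and $\boldsymbol{x}^{k+1}$. Once this bookkeeping is settled, the descent inequality and the integer monotonicity argument both proceed cleanly, and no further ingredients beyond Definition~\ref{Lsmooth}, Theorem~\ref{global solution}, and the remark on the sequence $\{d_k\}$ are required.
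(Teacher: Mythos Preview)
Your proposal is correct and follows essentially the same route as the paper: both combine the optimality of $\boldsymbol{x}^{k+1}$ in \eqref{subproblem} (with $\boldsymbol{x}^k$ as the feasible comparator) and the relative $L$-smoothness bound to obtain $F(\boldsymbol{x}^{k+1})\le F(\boldsymbol{x}^k)$, then use boundedness on the simplex for convergence and the monotonicity of $\{d_k\}$/nesting $I_{k+1}\subseteq I_k$ for finite-step support stabilization. Your write-up is in fact slightly more explicit than the paper's, which asserts the nonincrease of $\|\boldsymbol{x}^k\|_0$ without citing the remark; your caution about the $0\log 0$ convention is also a reasonable point that the paper leaves implicit.
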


\begin{proof}
    Notice that 
    $$
    \begin{aligned}
        F(\boldsymbol{x}^k) & = f(\boldsymbol{x}^k) + \lambda \|\boldsymbol{x}^k\|_0 \\
        & = f(\boldsymbol{x}^k) + \langle \nabla f(\boldsymbol{x}^k),\boldsymbol{x}^{k}-\boldsymbol{x}^k \rangle + \frac{1}{\alpha} D_h(\boldsymbol{x}^{k},\boldsymbol{x}^k) + \lambda \|\boldsymbol{x}^k\|_0 \\
        & \geq f(\boldsymbol{x}^k) + \langle \nabla f(\boldsymbol{x}^k),\boldsymbol{x}^{k+1}-\boldsymbol{x}^k \rangle + \frac{1}{\alpha} D_h(\boldsymbol{x}^{k+1},\boldsymbol{x}^k) + \lambda \|\boldsymbol{x}^{k+1}\|_0 \\
        & \geq f(\boldsymbol{x}^{k+1}) + \lambda \|\boldsymbol{x}^{k+1}\|_0 \\
        & = F(\boldsymbol{x}^{k+1}).
    \end{aligned}
    $$
    The first inequality holds due to \eqref{subproblem}, and the second is by Assumption~\ref{convex assumption}. Since $\boldsymbol{x}^k \in [0,1]^n$, the sequence $\{\boldsymbol{x}^k\}_{k \in \N}$ is bounded. Hence, the sequence $\{F(\boldsymbol{x}^k)\}_{k \in \N}$ is bounded below and converges to a limit $F^*$, i.e., $\lim\limits_{k \rightarrow \infty} F(\boldsymbol{x}^k) = F^*$. In addition, the number of number elements $\{\|\boldsymbol{x}^k\|_0\}_{k \in \N}$ is nonincreasing and converges. 
    Thus, the support of $\boldsymbol{x}^k$ converges to a set $I\subset [n]$. 
\end{proof}

\begin{remark}
    The above theorem says that after finite iterations, $\{\text{supp}(\boldsymbol{x}^k)\}_{k \in \N}$ remains the same. The sorting step \eqref{sorting} and removing step \eqref{removing} are redundant, and $\boldsymbol{x}^k$ solves the following lower-dimension convex optimization problem using BPG:
    \begin{equation}
        \min_{{\mathbf{1}_n}^\top \boldsymbol{x} = 1,\ \boldsymbol{x} \geq 0} f(\boldsymbol{x}) \quad \text{subject to } x_i = 0, i \not\in I.
    \end{equation}
\end{remark}
In the remaining of this subsection, we let $I=\textnormal{supp}(\boldsymbol{x}^k)$ for large enough $k$ and denote the solution set of the optimization problem~\eqref{subproblem} as $X^*$, i.e., 
\begin{equation}
\label{X*}
X^* = \arg \min \{f(\boldsymbol{x}): {\mathbf{1}_n}^\top \boldsymbol{x} = 1, \boldsymbol{x} \geq 0\ \text{and}\ x_i = 0, i \not\in I\}.
\end{equation}
    
\begin{corollary}
\label{properties of BPG}
    Under Assumption~\ref{convex assumption}, let $\{\boldsymbol{x}^k\}_{k \in \N}$ be the sequence generated by Algorithm~\ref{alg} with $0 < \alpha < 1/L$, then
    \begin{enumerate}[i)]
        \item After finite iterations, we have
        \begin{equation}
        \label{decreasing BPG}
        \begin{aligned}
            \alpha \left( f(\boldsymbol{x}^{k+1}) - f(\boldsymbol{x}) \right) \leq & D_h(\boldsymbol{x},\boldsymbol{x}^k) - D_h(\boldsymbol{x},\boldsymbol{x}^{k+1})\\
            & - (1-\alpha L)D_h(\boldsymbol{x}^{k+1},\boldsymbol{x}^k),
        \end{aligned}
        \end{equation}
        for $\forall \boldsymbol{x} \in \{\boldsymbol{y}: {\1_n^\top} \boldsymbol{y} = 1, \boldsymbol{y} \geq 0, y_i = 0, i \not \in I\}$. 
        \item $D_h(\boldsymbol{x}^{k+1},\boldsymbol{x}^{k})$ converges to 0 as $k \rightarrow \infty$.
        \item The sequence $\{\boldsymbol{x}^k\}_{k \in \N}$ converges to some $\boldsymbol{x}^* \in X^*$. 
    \end{enumerate}
\end{corollary}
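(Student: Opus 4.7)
The plan is to leverage Theorem~\ref{decrease thm}, which ensures that the support $\textnormal{supp}(\boldsymbol{x}^k) = I$ stabilizes after finitely many iterations. Once that happens the $\ell_0$ term in~\eqref{subproblem} contributes only a constant, so Algorithm~\ref{alg} coincides with standard BPG applied to the convex program $\min\{f(\boldsymbol{x}) : \boldsymbol{x} \in C_I\}$, where $C_I := \{\boldsymbol{x} \in \R^n : \mathbf{1}_n^\top \boldsymbol{x} = 1,\, \boldsymbol{x} \geq 0,\, x_i = 0 \text{ for all } i \notin I\}$. Parts (i)--(iii) then reduce to the classical descent, summability, and convergence statements for relatively smooth BPG on a convex problem, and I would derive them via the standard three-point technique.

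For part (i), I would start from the first-order optimality of~\eqref{subproblem} on $C_I$ (the $\ell_0$ penalty is locally constant after support stabilization, so it drops out): $\langle \alpha \nabla f(\boldsymbol{x}^k) + \nabla h(\boldsymbol{x}^{k+1}) - \nabla h(\boldsymbol{x}^k),\, \boldsymbol{x} - \boldsymbol{x}^{k+1}\rangle \geq 0$ for every $\boldsymbol{x} \in C_I$. Combining this with the three-point identity $\langle \nabla h(\boldsymbol{x}^{k+1}) - \nabla h(\boldsymbol{x}^k),\, \boldsymbol{x} - \boldsymbol{x}^{k+1}\rangle = D_h(\boldsymbol{x},\boldsymbol{x}^k) - D_h(\boldsymbol{x},\boldsymbol{x}^{k+1}) - D_h(\boldsymbol{x}^{k+1},\boldsymbol{x}^k)$, with convexity of $f$ to lower-bound $\alpha f(\boldsymbol{x})$ by $\alpha f(\boldsymbol{x}^k) + \alpha \langle \nabla f(\boldsymbol{x}^k), \boldsymbol{x} - \boldsymbol{x}^k\rangle$, and with the relative smoothness bound from Definition~\ref{Lsmooth} to upper-bound $\alpha f(\boldsymbol{x}^{k+1})$, yields~\eqref{decreasing BPG} after rearrangement. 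For part (ii), I would substitute $\boldsymbol{x} = \boldsymbol{x}^k$ into~\eqref{decreasing BPG} (using $D_h(\boldsymbol{x}^k,\boldsymbol{x}^k)=0$) to get $(1-\alpha L) D_h(\boldsymbol{x}^{k+1},\boldsymbol{x}^k) + D_h(\boldsymbol{x}^k,\boldsymbol{x}^{k+1}) \leq \alpha(f(\boldsymbol{x}^k) - f(\boldsymbol{x}^{k+1}))$, then sum over $k$; telescoping of the $f$-differences together with convergence of $\{f(\boldsymbol{x}^k)\}$ from Theorem~\ref{decrease thm} makes the left-hand side summable, and $1 - \alpha L > 0$ forces $D_h(\boldsymbol{x}^{k+1},\boldsymbol{x}^k) \to 0$.

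For part (iii), I would fix any $\boldsymbol{x}^* \in X^*$ and plug it into~\eqref{decreasing BPG}. Since $f(\boldsymbol{x}^{k+1}) \geq f(\boldsymbol{x}^*)$, the resulting inequality gives the Fej\'er-type monotonicity $D_h(\boldsymbol{x}^*,\boldsymbol{x}^{k+1}) \leq D_h(\boldsymbol{x}^*,\boldsymbol{x}^k)$, so $\{D_h(\boldsymbol{x}^*,\boldsymbol{x}^k)\}$ converges; summing the same inequality yields $\alpha \sum_k (f(\boldsymbol{x}^{k+1}) - f(\boldsymbol{x}^*)) \leq D_h(\boldsymbol{x}^*,\boldsymbol{x}^0) < \infty$, where finiteness uses $\|\boldsymbol{x}^0\|_0 = n$ from Remark~\ref{initialization phase}, and combined with the monotonicity of $\{f(\boldsymbol{x}^k)\}$ this forces $f(\boldsymbol{x}^k) \to f(\boldsymbol{x}^*) = \min_{C_I} f$. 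Compactness of $C_I$ then produces a cluster point $\bar{\boldsymbol{x}} \in C_I$ of $\{\boldsymbol{x}^k\}$, and continuity of $f$ gives $\bar{\boldsymbol{x}} \in X^*$. Applying the Fej\'er inequality with $\boldsymbol{x}^* = \bar{\boldsymbol{x}}$ and invoking continuity of $D_h(\bar{\boldsymbol{x}},\cdot)$ at $\bar{\boldsymbol{x}}$ (with the convention $0 \log 0 = 0$ handling any zero coordinates of $\bar{\boldsymbol{x}}$) gives $D_h(\bar{\boldsymbol{x}},\boldsymbol{x}^{k_j}) \to 0$ along the subsequence $\boldsymbol{x}^{k_j} \to \bar{\boldsymbol{x}}$, hence $D_h(\bar{\boldsymbol{x}},\boldsymbol{x}^k) \to 0$ on the whole sequence; Pinsker's inequality on the simplex then yields $\boldsymbol{x}^k \to \bar{\boldsymbol{x}}$. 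The main obstacle I anticipate is precisely the case where $\bar{\boldsymbol{x}}$ lies on the relative boundary of $C_I$: a direct KKT-based limiting argument is delicate because $\nabla h(\boldsymbol{x}) = \log \boldsymbol{x} + \mathbf{1}_n$ blows up there, which is why I would route the argument through objective values and Bregman Fej\'er monotonicity rather than through limits of the optimality inequality itself.
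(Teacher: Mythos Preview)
Your proposal is correct and follows essentially the same approach as the paper: after invoking Theorem~\ref{decrease thm} to freeze the support at $I$, the paper simply observes that Algorithm~\ref{alg} reduces to standard BPG on the restricted simplex and cites \cite{bauschke2017descent} for all three conclusions, while you spell out that very argument (three-point identity plus relative smoothness for (i), telescoping for (ii), Bregman--Fej\'er monotonicity for (iii)). One small slip: in part (iii) your telescoping sum should start at the stabilization index $M$, so the bounding constant is $D_h(\boldsymbol{x}^*,\boldsymbol{x}^M)$ rather than $D_h(\boldsymbol{x}^*,\boldsymbol{x}^0)$, but this is harmless since $\boldsymbol{x}^M$ is strictly positive on $I$.
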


\begin{proof}
    By Theorem \ref{decrease thm}, after $M$ iterations, $\{\text{supp}(\boldsymbol{x}^k)\}_{k \in \N}=I$. The proof follows directly from \cite{bauschke2017descent}. 
\end{proof}


\begin{theorem}
\label{function value converge}
    Under Assumption~\ref{convex assumption}, the sequence $\{\boldsymbol{x}^k\}_{k \in \N}$ generated by Algorithm~\ref{alg} with $0 < \alpha  < 1/L$ converges to $\boldsymbol{x}^*\in X^*$ with $\textnormal{supp}(\boldsymbol{x}^*)=I$ and $x_i^* \geq 1- e^{-\alpha \lambda}$ $\forall i \in I$. In addition, $\boldsymbol{x}^*$ is a local minimum point of $F(\boldsymbol{x})$ over the simplex set $S: = \{\boldsymbol{x}: {\1_n^\top} \boldsymbol{x} = 1, \boldsymbol{x} \geq 0\}$. If $\textnormal{supp}(\boldsymbol{x}^M)=I$, then, for $K\geq M+1$, 
    \begin{equation}
        F(\boldsymbol{x}^{K}) - F(\boldsymbol{x}^*) \leq \frac{1}{\alpha(K-M)} D_h(\boldsymbol{x}^*,\boldsymbol{x}^{M}).
    \end{equation}   
\end{theorem}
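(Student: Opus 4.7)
The plan is to split the statement into four linked claims and address them in the order in which they build on each other: (a) existence of the limit $\boldsymbol{x}^*$ with support contained in $I$; (b) the pointwise lower bound $x_i^* \geq 1-e^{-\alpha\lambda}$ for $i \in I$, which also upgrades the support containment to equality $\textnormal{supp}(\boldsymbol{x}^*)=I$; (c) local minimality of $\boldsymbol{x}^*$ for $F$ on the simplex $S$; and (d) the $O(1/k)$ rate. Step (a) is essentially a quotation of Theorem~\ref{decrease thm} and Corollary~\ref{properties of BPG}(iii): past iteration $M$ we have $\textnormal{supp}(\boldsymbol{x}^k)=I$, the sequence converges to some $\boldsymbol{x}^*\in X^*$, and automatically $x_i^*=0$ for $i\notin I$.

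Step (b) is the key new ingredient. After the support stabilises, Algorithm~\ref{solve subproblem} returns $d_{k+1}=|I|=:d$ for every $k\geq M$. Combining this with the strict-decrease-then-strict-increase pattern of $l$ established in Theorem~\ref{lambda bound}, the choice $d_{k+1}=d$ forces $l(d)\leq l(d-1)$ when $d\geq 2$ (the case $d=1$ is trivial since $\boldsymbol{x}^{k+1}$ is then a unit basis vector). Expanding this inequality in the spirit of \eqref{findm} and rearranging yields
\[
y^{k+1}_{(d)} \;\geq\; (e^{\alpha\lambda}-1)\sum_{i=1}^{d-1} y^{k+1}_{(i)}.
\]
The index $i_0\in I$ achieving the minimum of $y^{k+1}_i$ over $I$ corresponds to $y^{k+1}_{(d)}$, and substitution into the removing step~\eqref{removing} gives
\[
x^{k+1}_{i_0} \;=\; \frac{y^{k+1}_{i_0}}{y^{k+1}_{i_0}+\sum_{i\in I,\,i\neq i_0}y^{k+1}_i} \;\geq\; \frac{e^{\alpha\lambda}-1}{e^{\alpha\lambda}} \;=\; 1-e^{-\alpha\lambda}.
\]
Since $i_0$ indexes the smallest entry on $I$, the bound holds coordinatewise on $I$, and it survives the limit $k\to\infty$, giving both $x_i^*\geq 1-e^{-\alpha\lambda}$ and $\textnormal{supp}(\boldsymbol{x}^*)=I$.

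For (c), I would choose $\delta>0$ smaller than $\min_{i\in I}x_i^*$ and, using continuity of $f$, also small enough that $|f(\boldsymbol{x})-f(\boldsymbol{x}^*)|\leq\lambda$ whenever $\|\boldsymbol{x}-\boldsymbol{x}^*\|<\delta$. For any $\boldsymbol{x}\in S$ within $\delta$ of $\boldsymbol{x}^*$, the first bound forces $\textnormal{supp}(\boldsymbol{x})\supseteq I$. If equality holds, $\boldsymbol{x}$ lies in the feasible set defining $X^*$, so $f(\boldsymbol{x}^*)\leq f(\boldsymbol{x})$ while the $\ell_0$ counts match; if containment is strict, then $\|\boldsymbol{x}\|_0\geq \|\boldsymbol{x}^*\|_0+1$, and the penalty gap of at least $\lambda$ dominates the continuity error in $f$. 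Either way, $F(\boldsymbol{x})\geq F(\boldsymbol{x}^*)$.

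For (d), I would instantiate the three-point inequality \eqref{decreasing BPG} at $\boldsymbol{x}=\boldsymbol{x}^*$ (admissible because $\textnormal{supp}(\boldsymbol{x}^*)=I$) for $k=M,\dots,K-1$, discard the nonnegative $(1-\alpha L)D_h(\boldsymbol{x}^{k+1},\boldsymbol{x}^k)$ term, and telescope the $D_h(\boldsymbol{x}^*,\cdot)$ contributions:
\[
\alpha\sum_{k=M}^{K-1}\bigl(f(\boldsymbol{x}^{k+1})-f(\boldsymbol{x}^*)\bigr) \;\leq\; D_h(\boldsymbol{x}^*,\boldsymbol{x}^M).
\]
Because the $\ell_0$ norm is constant on this index range, the descent property of Theorem~\ref{decrease thm} translates into monotonicity of $f(\boldsymbol{x}^k)$, so each of the $K-M$ summands is at least $f(\boldsymbol{x}^K)-f(\boldsymbol{x}^*)$; the equal $\ell_0$ terms then cancel in $F(\boldsymbol{x}^K)-F(\boldsymbol{x}^*)$, producing the stated rate. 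I expect step (b) to be the main obstacle, since it is the only place where the combinatorial sorting rule must be converted into a quantitative analytic lower bound on the iterates; steps (c) and (d) are then relatively routine consequences of (b) together with the already proven descent and three-point machinery.
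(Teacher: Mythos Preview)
Your proposal is correct and follows essentially the same route as the paper: the convergence and membership in $X^*$ are quoted from Corollary~\ref{properties of BPG}; the lower bound on the $I$-entries is obtained exactly as in the paper from the inequality $l(d_{k+1})\leq l(d_{k+1}-1)$ (Theorem~\ref{lambda bound}) combined with the removing step; the local minimality argument is the same $\delta=\min\{\delta_1,\delta_2\}$ case split; and the rate comes from telescoping \eqref{decreasing BPG} at $\boldsymbol{x}^*$ together with monotonicity of $F(\boldsymbol{x}^k)$. The only cosmetic difference is that the paper states the per-iterate bound $x_i^{k+1}\geq 1-e^{-\alpha\lambda}$ for all $k$ (not only $k\geq M$), but your restriction to the stable-support regime is already sufficient for the theorem.
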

\begin{proof}
    The global convergence of $\{\boldsymbol{x}^k\}_{k \in \N}$ comes from Corollary~\ref{properties of BPG}. Next, we show that $\textnormal{supp}(\boldsymbol{x}^*)=I$ and $\boldsymbol{x}^*$ is a local minimum point of $F(\boldsymbol{x})$.
 
    From Theorem~\ref{lambda bound}, we have $l(d_{k+1})-l(d_{k+1}-1)\leq 0$, which gives 
    \begin{align*}
         \frac{ y_{(d_{k+1})}^{k+1}}{\sum_{i=1}^{d_{k+1}} y_{(i)}^{k+1}}\geq 1-e^{-\alpha\lambda}.
    \end{align*}
    Then Theorem~\ref{global solution} shows that $x_i^{k+1}\geq 1-e^{-\alpha \lambda}$ for $i\in I_{k+1}$. Since $\boldsymbol{x}^k$ converges to $\boldsymbol{x}^*$, we have ${x}^*_i\geq 1-e^{-\alpha\lambda}$ for $i\in I$. Thus $\textnormal{supp}(\boldsymbol{x}^*)=I$ and $F(\boldsymbol{x}^k) \rightarrow F(\boldsymbol{x}^*)$ as $k \rightarrow \infty$. 
    
    Now we show that $\boldsymbol{x}^*$ is a local minimum point of $F(\boldsymbol{x})$ over the set $S$, i.e., there exist $\delta>0$ such that $F(\boldsymbol{x})>F(\boldsymbol{x}^*)$ for any $\boldsymbol{x}\in S$ such that $\|\boldsymbol{x} - \boldsymbol{x}^*\|<\delta$. 
    Denote $\delta_1 = 1-e^{-\alpha \lambda}$. Since $x_i^*\geq \delta_1$, for any $\boldsymbol{x}$ such that $\|\boldsymbol{x}-\boldsymbol{x}^*\| < \delta_1$, we have $\textnormal{supp}(\boldsymbol{x})\supset \textnormal{supp}(\boldsymbol{x^*})$. In addition, the continuity of $f$ shows that there exists $\delta_2 > 0$, such that $|f(\boldsymbol{x}) - f(\boldsymbol{x}^*)| < \lambda$ if  $\|\boldsymbol{x} - \boldsymbol{x}^*\| < \delta_2$. Let $\delta=\min\{\delta_1,\delta_2\}$, and we consider $\boldsymbol{x}$ such that $\|\boldsymbol{x} - \boldsymbol{x}^*\| < \delta$.    
    \begin{itemize}
        \item If $\text{supp}(\boldsymbol{x}) = \text{supp}(\boldsymbol{x}^*)$, Corollary~\ref{properties of BPG} shows that $\boldsymbol{x}^*\in X^*$, thus $F(\boldsymbol{x})\geq F(\boldsymbol{x}^*)$ if $\boldsymbol{x}\in S$.
        \item If $\text{supp}(\boldsymbol{x}) \supsetneqq  \text{supp}(\boldsymbol{x}^*)$, we have
        $$
        \begin{aligned}
            F(\boldsymbol{x}) &= f(\boldsymbol{x}) + \lambda \|\boldsymbol{x}\|_0 \\
            & > f(\boldsymbol{x}^*) - \lambda + \lambda \|\boldsymbol{x}\|_0 \\ 
            & \geq f(\boldsymbol{x}^*) + \lambda \|\boldsymbol{x}^*\|_0 = F(\boldsymbol{x}^*).
        \end{aligned}
        $$
    \end{itemize}
    Hence, $\boldsymbol{x}^*$ is a local minimum of $F$ over the set $S$. 

    If $\textnormal{supp}(\boldsymbol{x}^M)=I$, then we have $\textnormal{supp}(\boldsymbol{x}^k)=I$ for all $k\geq M$. 
    By i) in Corollary~\ref{properties of BPG}, for $k\geq M$ we have
    $$
    F(\boldsymbol{x}^{k+1}) - F(\boldsymbol{x}^*) = f(\boldsymbol{x}^{k+1}) - f(\boldsymbol{x}^*) \leq \frac{1}{\alpha} \left( D_h(\boldsymbol{x}^*,\boldsymbol{x}^k) - D_h(\boldsymbol{x}^*,\boldsymbol{x}^{k+1}) \right).
    $$
    Since $\{F(\boldsymbol{x}^{k})\}_{k \in \N}$ is nonincreasing, we have 
    $$
    \begin{aligned}
        F(\boldsymbol{x}^{K}) - F(\boldsymbol{x}^*) & \leq \frac{1}{K-M}\sum_{k = M}^{K-1} \left( F(\boldsymbol{x}^{k+1}) - F(\boldsymbol{x}^*) \right)\\
            & \leq \frac{1}{\alpha(K-M)} D_h(\boldsymbol{x}^*,\boldsymbol{x}^{M}).
    \end{aligned}
    $$
    The theorem is proved. 
\end{proof}

\begin{remark}
    The above theorem indicates that given $\lambda$ and $\alpha$, one can control the minimal value in $I$. 
\end{remark}

\section{Numerical Experiments}
In this section, we present the numerical performance of Algorithm \ref{alg} for solving the problem \eqref{opt}. For the initialization phase (Algorithm~\ref{ABPG}) in Algorithm~\ref{alg}, we follow the parameter setting in~\cite{hanzely2021accelerated} and set $\gamma = 2$, $\rho = 1.2$, and $G_{\text{min}} = 10^{-2}$ to obtain $\bsx^0$. The initial points for Algorithm~\ref{alg} and GPG~\cite{xiao2022geometric} are set to be $(1/n,...,1/n)^\top$. The required time of Algorithm~\ref{alg} includes both the initialization phase and L0BPG step. All numerical experiments are implemented by running MATLAB R2023b on a MacBook Pro (Apple M2 Pro). 



\subsection{Experiments on synthetic data}

\label{sec: synthetic}
In this subsection, we use the same setup as~\cite{xiao2022geometric}. We consider the data $\boldsymbol{b}$ generated by
$$
\boldsymbol{b} = \m{A} \boldsymbol{x}^* + \boldsymbol{n},
$$
where $\m{A} \in \R^{m \times n}$, whose elements are independently sampled from standard Gaussian distribution, and $\boldsymbol{n}$ is the noise. The original vector $\boldsymbol{x}^*$ is generated by $\boldsymbol{x}^* = |\Bar{\boldsymbol{x}}|/\|\Bar{\boldsymbol{x}}\|_1$, where $\Bar{\boldsymbol{x}} \in \R^n$ is generated using \texttt{sprandn} from Matlab and $|\bar{\boldsymbol{x}}|$ takes the element-wise absolute values of $\bsx$. In addition, to control the noise level, we define the signal-to-ratio (SNR)~\cite{xiao2022geometric} of data $\boldsymbol{b}$ as 
$$
SNR = 10\log_{10} \frac{\|\m{A} \boldsymbol{x}^*\|^2}{\|\boldsymbol{n}\|^2}.
$$

To find the original vector $\boldsymbol{x}^*$, we consider the following optimization problem:
\begin{equation}
\begin{aligned}
\label{opt: linear model}
    \min_{\boldsymbol{x} \in \R^{n}} \quad & \sum_{i=1}^m \phi(b_i - \boldsymbol{a}_i^\top \boldsymbol{x}) + \lambda \|\boldsymbol{x}\|_0\\
    \text{subject to } \quad  & \mathbf{1}_n^\top \boldsymbol{x} = 1,\ \boldsymbol{x} \geq 0,
\end{aligned}
\end{equation}
where $\boldsymbol{a}_i^\top$ is the $i$th row of $\m{A}$ and $\phi$ is the loss function. To evaluate the quality of the recovered vectors, we use reconstruction SNR (RSNR), which is defined as 
$$
RSNR = 10\log_{10} \frac{\|\boldsymbol{x}^*\|^2}{\|\boldsymbol{x}^* - \hat{\boldsymbol{x}}\|^2},
$$
where $\hat{\boldsymbol{x}}$ is the recovered vector. 

In Experiments~\ref{SNR test}-\ref{exp: efficacy L0BPG step}, we consider the Gaussian noise and $\phi$ is quadratic. In Experiment~\ref{exp: huber loss}, we consider mixed noise and adopt Huber loss in~\eqref{opt: linear model}.

\begin{figure}
    \centering
    \includegraphics[scale = 0.4]{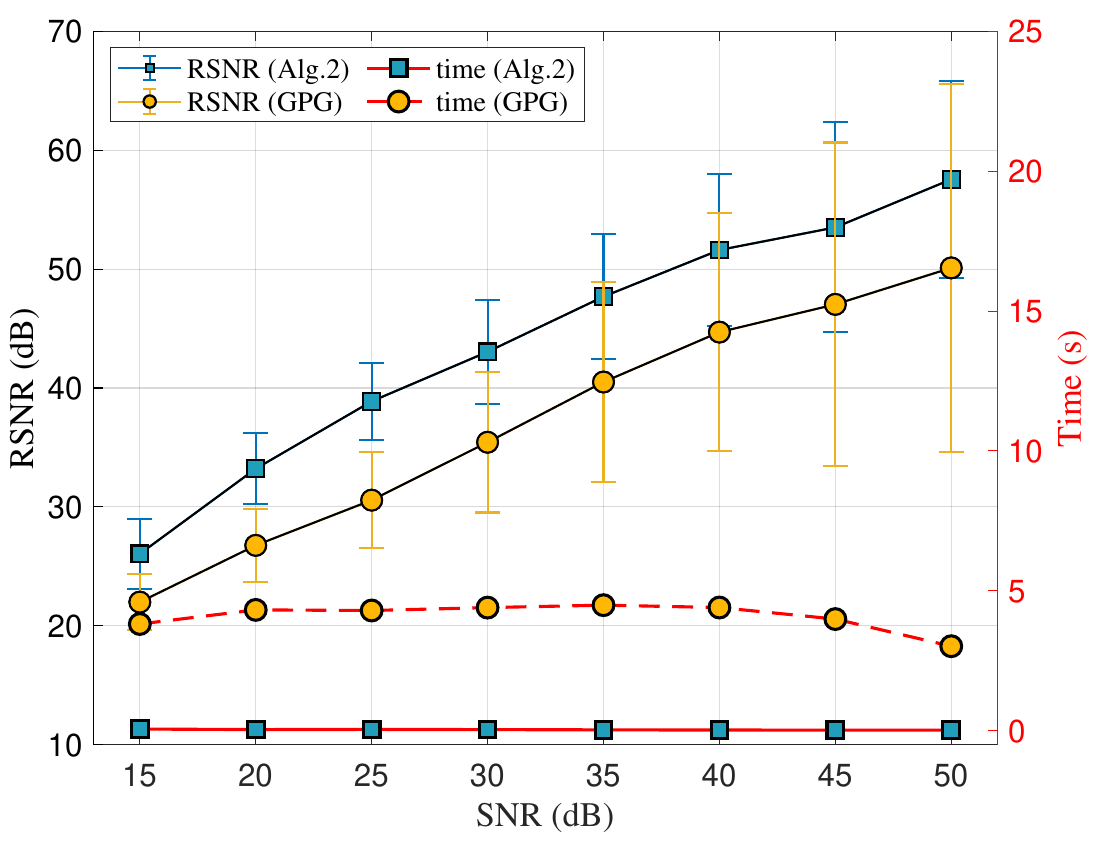}
    \caption{Comparison of Algorithm~\ref{alg} and GPG in accuracy (RSNR) and time for different SNRs (averaged over 100 runs).}
    \label{SNR result}
\end{figure}

\begin{experiment}
\label{SNR test}
\textbf{(recovery accuracy)} We set $\m{A} \in\R^{200\times 400}$ and all elements of $\boldsymbol{n}$ are independently generated from a zero-mean Gaussian distribution. The original vector $\boldsymbol{x}^*$ is generated with approximately $2$\% nonzero entries.

In this numerical experiment, we set $L= \max_{i,j} |(\m{A}^\top \m{A})_{ij}|$, $\lambda = 2$, and $\varepsilon_1 = \varepsilon_2 = 10^{-6}$ in Algorithm~\ref{alg}. For the parameters in GPG, we choose the default values in the paper~\cite{xiao2022geometric} except $\lambda_0 = 0.01$, $\texttt{ITmax} = 3000$, and $\texttt{Tol} = 10^{-4}$. 
Figure~\ref{SNR result} displays RSNR and the required time of Algorithm \ref{alg} and GPG~\cite{xiao2022geometric} for different SNRs in the data $\boldsymbol{b}$. It shows that Algorithm~\ref{alg} achieves higher accuracy in a much shorter time than GPG. In addition, Algorithm~\ref{alg} would be more robust than GPG since it attains a lower standard error. 



\end{experiment}

\begin{experiment}
\label{exp: support accuracy}
\textbf{(support accuracy)} 
We choose a similar setup as in Experiment~\ref{SNR test}. We test on different sizes for the matrix $\m{A}$ under a fixed SNR = $50$. The original vector $\boldsymbol{x}^*$ is generated with approximately 4\% nonzero entries.

To quantitatively assess the performance of the algorithms in recovering the support of $\boldsymbol{x}^*$, we employ a confusion matrix with a detailed definition in Table~\ref{confusion matrix} and compute various metrics, including accuracy, precision, recall, and the F1 score. 

        
\begin{table}[ht]
	\centering
	\begin{tabular}{c|c|c|c|c}

        \multicolumn{2}{c}{\multirow{2}*{~}} & \multicolumn{2}{c}{\textbf{Actual}} & \\
        \hhline{~~--~}
        \multicolumn{2}{c|}{~} & Nonzero & Zero & \\
        \hhline{~|----}
        \multirow{3}{*}{\textbf{Predicted}} & Nonzero & \cellcolor{blues3} \makecell{True positive \quad \\(\textbf{TP})} & \cellcolor{blues1}\makecell{False positive\ \ \\(\textbf{FP})} & \multicolumn{1}{c|}{\makecell{ \textbf{Precision}\\ $\frac{\text{TP}}{\text{TP}+\text{FP}}$}}\\
        \hhline{~|----}
        & Zero & \cellcolor{blues1} \makecell{ False negative\\ (\textbf{FN})} & \cellcolor{blues3} \makecell{ True negative\\ (\textbf{TN})} & \\
    	\hhline{~|----}
        \multicolumn{2}{c|}{~} & \makecell{ \textbf{Recall}\\ $\frac{\text{TP}}{\text{TP}+\text{FN}}$} & & \multicolumn{1}{c|}{\makecell{ \textbf{Accuracy}\\ $\frac{\text{TP}+\text{TN}}{\text{TP}+\text{FN}+\text{FP}+\text{TN}}$}} \\
        \hhline{~~-~-}
        \noalign{\vspace*{0.5cm}}
        \multicolumn{5}{c}{$\textbf{F1} =\displaystyle {2\times \textbf{Precision}\times\textbf{Recall}\over \textbf{Precision}+\textbf{Recall}}$}
	\end{tabular}
    \caption{Confusion matrix metrics}
    \label{confusion matrix}
\end{table}

In our numerical experiment, we set $\varepsilon_1 = \varepsilon_2 = 10^{-7}$ in Algorithm~\ref{alg}. For the parameters in GPG, we choose a similar setup as in Experiment~\ref{SNR test} except for $\texttt{ITmax} = 2000$. For the regularization parameters $\lambda$ in Algorithm~\ref{alg} and $\lambda_0$ in GPG~\cite{xiao2022geometric}, we choose the values such that the predicted number of the nonzero elements in the estimated $\hat{\bsx}$ is approximately equal to the actual number of the nonzero elements in $\bsx^*$.

The comparative evaluation of the two algorithms is presented in Table~\ref{metrics}. The results indicate that both algorithms exhibit a high accuracy, signifying their ability to find sparse solutions, given that most elements in $\boldsymbol{x}^*$ are zero. Nevertheless, Algorithm~\ref{alg} demonstrates significantly enhanced precision, recall, and F1 scores compared to GPG. Specifically, for the nonzero elements within $\boldsymbol{x}^*$, GPG recovers only half of them, while our algorithm achieves nearly perfect recovery according to their recall and precision. In addition, Algorithm~\ref{alg} attains substantially lower objective function values within a shorter time.


\begin{table}
	\centering
	\begin{tabular}{c|c|cccccc}
		\hline
         \multicolumn{2}{c|}{~}   & accuracy & precision & recall & F1 & time (s) & $\frac{1}{2}\|\m{A} \hat{\boldsymbol{x}} - \boldsymbol{b}\|^2$ \\
        \hline
        \multirow{2}*{I} &  Alg.~\ref{alg} & \textbf{0.994} & \textbf{0.969} & \textbf{0.939} & \textbf{0.949} & \textbf{0.017} & $\mathbf{6.50 \times 10^{-4}}$ \\
        ~ & GPG~\cite{xiao2022geometric} &  0.962 & 0.556 & 0.504 & 0.520 & 0.088 & $2.578 \times 10^{-1}$\\
		\hline
        \multirow{2}*{II} & Alg.~\ref{alg} &  \textbf{0.999} & \textbf{0.990} & \textbf{0.988} & \textbf{0.989} & \textbf{0.241} & $\mathbf{2.188 \times 10^{-5}}$ \\
        ~ & GPG~\cite{xiao2022geometric} & 0.961 & 0.511 & 0.496 & 0.501 & 1.778 & $3.656 \times 10^{-1}$\\
        \hline
	\end{tabular}
    \caption{Performance based on different metrics under case I ($\m{A}_{50 \times 300}$) and case II ($\m{A}_{170 \times 900}$) (average over 100 runs).}
    \label{metrics}
\end{table}

The efficacy of Algorithm~\ref{alg} is vividly depicted in Figure~\ref{A300A900}, which presents a comprehensive overview of the metric performances across various dimensions of the matrix
$\m{A}$. Irrespective of the matrix size, Algorithm~\ref{alg} consistently exhibits high accuracy, suggesting its robust ability to identify TP and TN with remarkable accuracy. In addition, a noteworthy observation is the upward trend in all metrics as the row dimension of the matrix $\m{A}$ expands, especially for the precision, recall, and F1. 
As the ratio of the number of rows to the number of columns increases, Algorithm~\ref{alg} progressively exhibits a remarkable capability to recover the ground truth vector $\bsx^*$.


\begin{figure}
\centering
  \begin{subfigure}[t]{0.45\linewidth}
    \centering
    \includegraphics[scale=0.3]{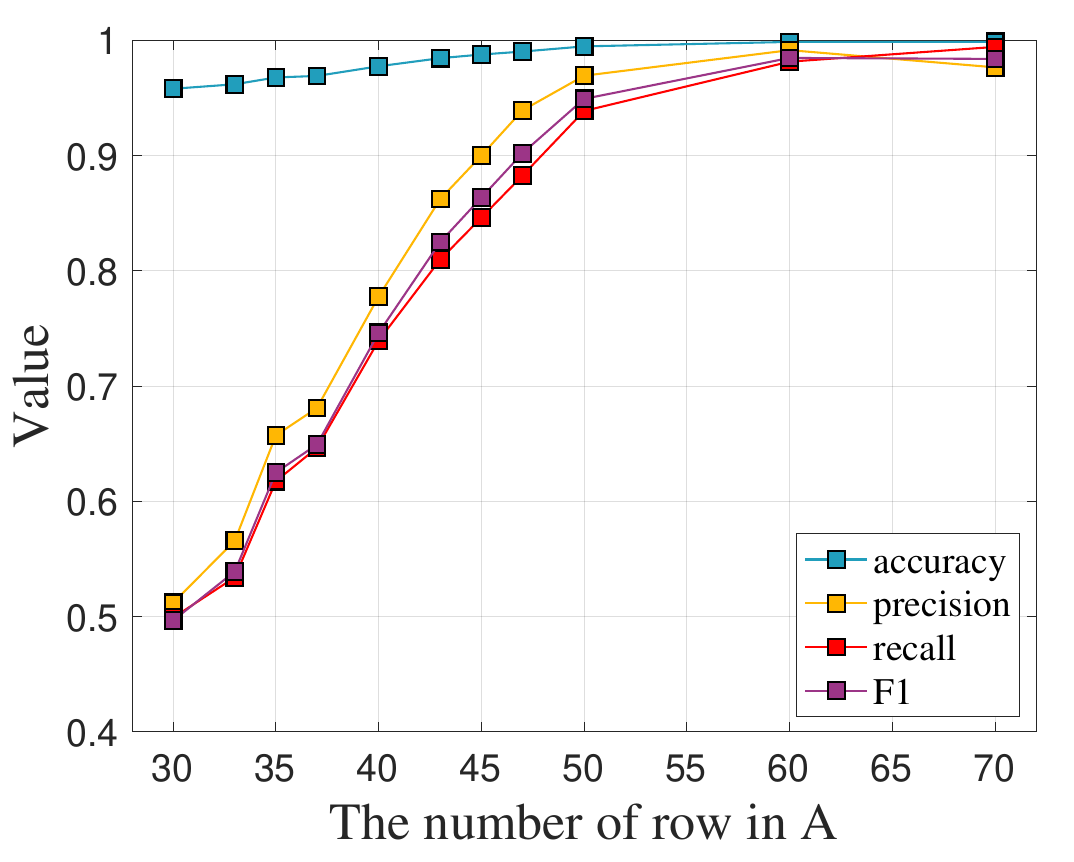}
  \end{subfigure}%
  \hspace{10mm}
  \begin{subfigure}[t]{0.45\linewidth}
    \centering
    \includegraphics[scale=0.3]{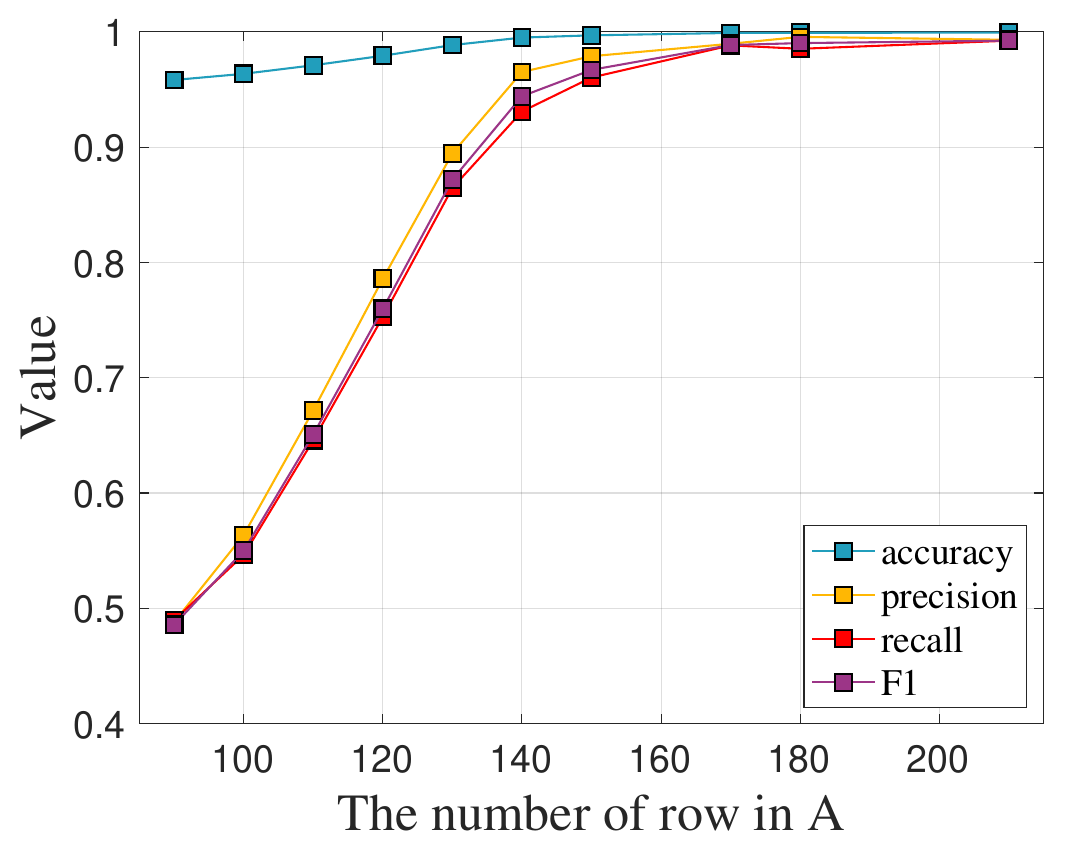}
  \end{subfigure}
  \caption{(Left) Metrics performance under fixed 300 columns in $\m{A}$ (average over 100 runs). (Right) Metrics performance under fixed 900 columns in $\m{A}$ (average over 100 runs)}
  \label{A300A900}
\end{figure}

\end{experiment}

\begin{experiment}
\label{exp: efficacy L0BPG step}
(\textbf{{\color{blue}e}fficacy of the L0BPG step in Algorithm~\ref{alg}}) We validate the efficacy of Algorithm~\ref{alg} in picking an element that was not ranked high during the initialization phase. We choose a similar setup as in Experiment~\ref{SNR test} with $\m{A} \in \R^{60 \times 300}$ without noise. 
The original vector $\bsx^*$ has $15$ nonzero entries, and we will get a vector with $N (<15)$ nonzero elements with the sparsity penalty.

We set $\lambda = 1.5$, $N = 12$ and $\varepsilon_1 = \varepsilon_2 = 10^{-8}$. We plot the numerical changes of four elements (12th to 15th largest components in $\bsx^*$) in Figure~\ref{fig:numerical change}. Note that we rank the four elements based on the original vector $\bsx^*$, and the figure shows that Algorithm~\ref{alg} picks the original 14th largest element instead of the 12th one. 

\begin{figure}[t]
    \begin{minipage}[p]{0.45\textwidth} 
    \includegraphics[scale=0.3]{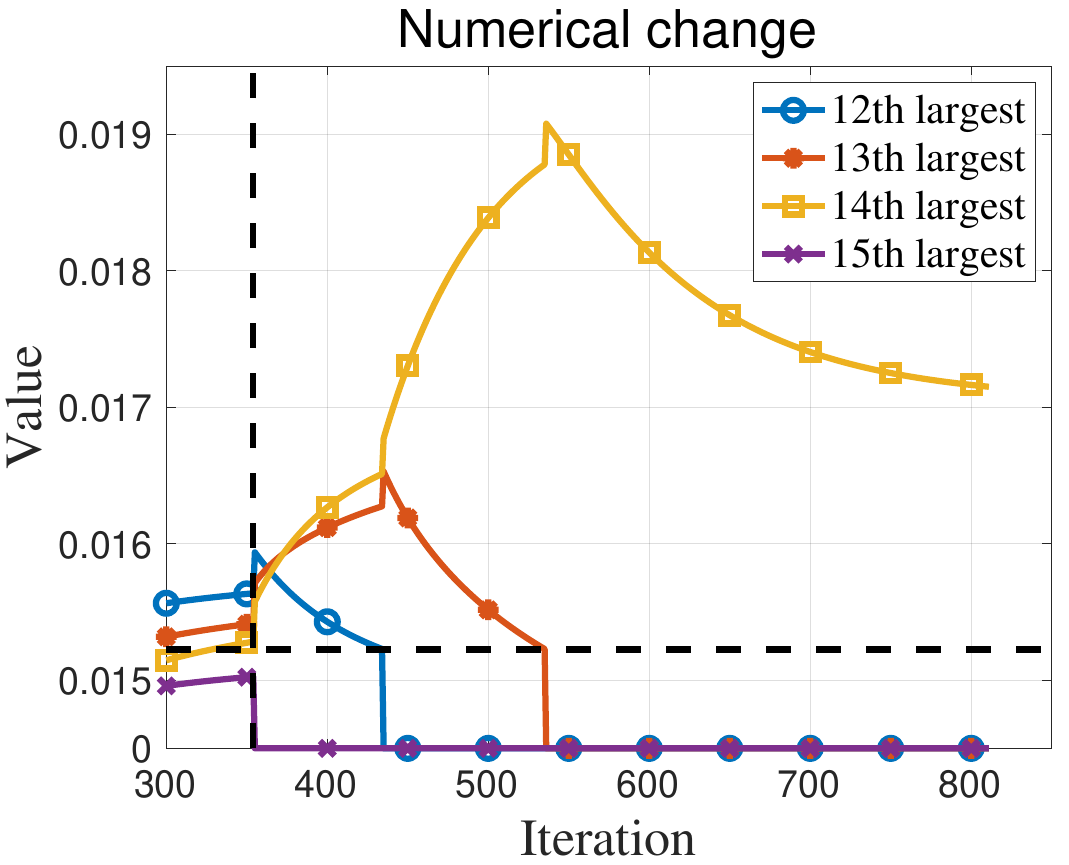}
    \end{minipage}
    \hspace{5mm}
    \begin{minipage}[c]{0.45\textwidth}
    \centering
    \begin{tabular}{|c|c|c|}
        \hline
        ~ & \makecell{Alg.~\ref{alg} \\ keep 14th}  & keep 12th\\
        \hline
        $\frac{1}{2}\|\m{A} \hat{\bsx} - \boldsymbol{b}\|^2$ & \textbf{0.0176} & 0.0191\\
        \hline
    \end{tabular}
    \end{minipage}
    \caption{(Left) The numerical changes of the four elements ranked from the 12th to the 15th largest of the original $\bsx^*$ in Algorithm~\ref{alg}. The dashed vertical and horizon lines indicate the completion of the initialization phase and the minimal value controlled, i.e., $1-e^{-\alpha \lambda}$, respectively. (Right) The error comparison for including the 14th and the 12th elements of $\bsx^*$, where $\hat{\bsx}$ is the output vector. Both vectors contain the first 11 largest elements in $\bsx^*$. }
    \label{fig:numerical change}
\end{figure}

Given that Algorithm~\ref{alg} outputs a vector with 12 nonzero elements, we further substantiate that the 14th largest element in $\bsx^*$ outperforms the 12th by solving the problem without the sparsity penalty, yet preserving only the top 12 largest elements in $\bsx^*$. Both vectors contain the first 11 largest elements in $\bsx^*$, and Figure~\ref{fig:numerical change} shows that including the 14th one has a smaller error than that including the 12th one, which validates the efficacy of Algorithm~\ref{alg}.
\end{experiment}

\begin{experiment}
\label{exp: huber loss}
(\textbf{Huber loss}) We consider $\phi$ as the Huber loss function, which is defined as
\begin{equation}
    \phi(e) = \left\{  
         \begin{aligned}
         & \frac{1}{2}e^2, \quad |e| \leq c\\ 
         & c|e| - \frac{1}{2}c^2, \quad  |e| > c,
         \end{aligned}  
         \right.
\end{equation}
where $c$ is a cutoff parameter controlling the level of robustness. 

We choose $c=1.0$ and $\m{A} \in \R^{200 \times 400}$. The original vector $\boldsymbol{x}^*$ is generated with approximately 2\% nonzero entries. Contrary to the noise $\boldsymbol{n}$ generated in Experiment~\ref{SNR test}, we consider the noise $\boldsymbol{n}$ is comprised of Gaussian noise $\boldsymbol{n}_G$ and impulse noise $\boldsymbol{n}_I$:
\begin{itemize}
    \item Gaussian noise $\boldsymbol{n}_G$: we generate the noise under a fixed SNR = 20, where 
    $
    SNR = 10\log_{10} (\left\| \m{A} \boldsymbol{x}^* \right\|^2)/(\|\boldsymbol{n}_G\|^2).
    $
    \item Salt and Pepper Impulse noise $\boldsymbol{n}_I$: we generate the noise using \texttt{imnoise} from Matlab to data $\boldsymbol{b}$ under different densities  ranging from 0.1 to 0.2 with values 0 or $20\|\boldsymbol{n}_G\|_{\infty}$. 
\end{itemize}

For parameter choosing, we set $L=\max_{i,j}|(\m{A}^\top\m{A})_{ij}|$, $\varepsilon_1 = \varepsilon_2 = 10^{-6}$ in Algorithm~\ref{alg}. Figure~\ref{fig: Huber loss} presents RSNR for both Huber loss and quadratic loss using our Algorithm~\ref{alg}. It can be observed that RSNR decreases when the impulse noise density increases. With the mixed noise, using Huber loss with different $\lambda$ values consistently achieves a higher accuracy than using the quadratic loss. 

\begin{figure}[H]
    \centering
    \includegraphics[scale = 0.4]{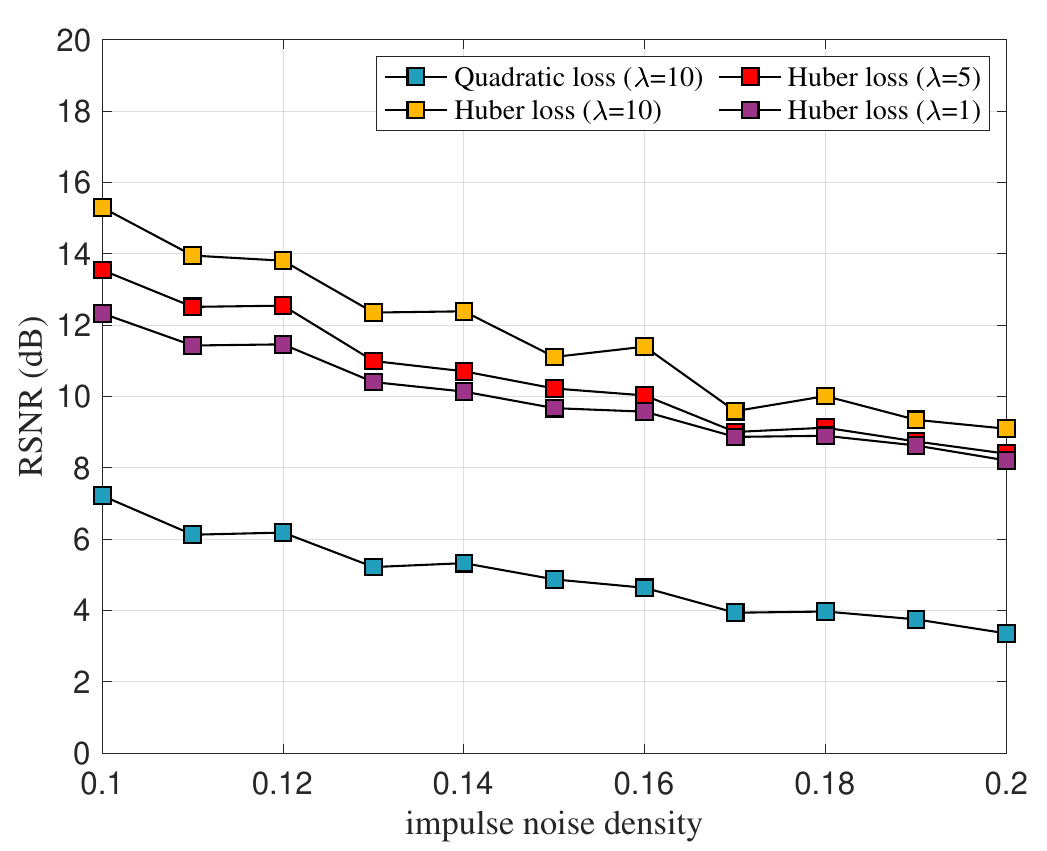}
    \caption{Comparison of Huber loss and quadratic loss in accuracy (RSNR) using Algorithm~\ref{alg} with different $\lambda$ (averaged over 100 runs).}
    \label{fig: Huber loss}
\end{figure}
\end{experiment}

\subsection{Experiments with real data}
\label{sec: real}
We consider two specific optimization problems: the sparse hyperspectral unmixing problem and the sparse portfolio optimization problem.
\begin{experiment}
(\textbf{hyperspectral unmixing}) We consider the following optimization problem: 
\begin{equation}
\begin{aligned}
\label{opt: hyperspectral unmixing}
    \min_{\m{X} \in \R^{n \times p}} \quad & \frac{1}{2} \|\m{A}\m{X} - \m{B}\|_F^2 + \lambda \|\m{X}\|_0\\
    \text{subject to } \quad  & \mathbf{1}_n^\top \m{X} = \mathbf{1}_p^\top,\ \m{X} \geq 0,
\end{aligned}
\end{equation}
where $\m{B} \in \R^{m \times p}$ represents the observed hyperspectral image with $m$ bands and $p$ pixels, $\m{A} \in \R^{m \times n}$ is the spectral library composed of $n$ endmembers, and $\m{X} \geq 0$ indicates that all elements in $\m{X}$ are nonnegative. The primary objective of hyperspectral unmixing is to recover the unknown abundance matrix $\m{X}$ and estimate the spatial distributions or relative proportions of spectral signatures within each pixel~\cite{ince2022fast}. It's worth noting that, given the likelihood of only a few spectral signatures from $\m{A}$ contributing to the observed spectra of each pixel, each column of the matrix $\m{X}$ is typically sparse.

In this experiment, we focus on a well-known region of the Cuprite dataset\footnote{\url{https://aviris.jpl.nasa.gov/data/free_data.html}} with 250×191 pixels. The original hyperspectral image comprises 224 bands; however, we excluded some bands due to their low SNR, resulting in 188 bands. The spectral library $\m{A}$ was built as a collection of $n = 498$ spectral signatures in the USGS library. Given that the true abundance maps of the Cuprite dataset are not available, we will refer to the Geological Reference Map~\cite{rasti2024image} and the estimated abundance matrix generated by SUnSAL algorithm~\cite{bioucas2010alternating}, whose performance was evaluated and compared to the Tricorder maps in~\cite{iordache2009unmixing}. 

\begin{figure}[H]
\centering
    \begin{minipage}[c]{0.45\textwidth} 
    \includegraphics[scale=0.9]{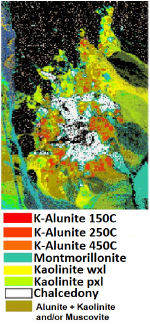}
    \end{minipage}
    \hspace{-1cm}
    \begin{minipage}[c]{0.45\textwidth}
    \includegraphics[scale=0.35]{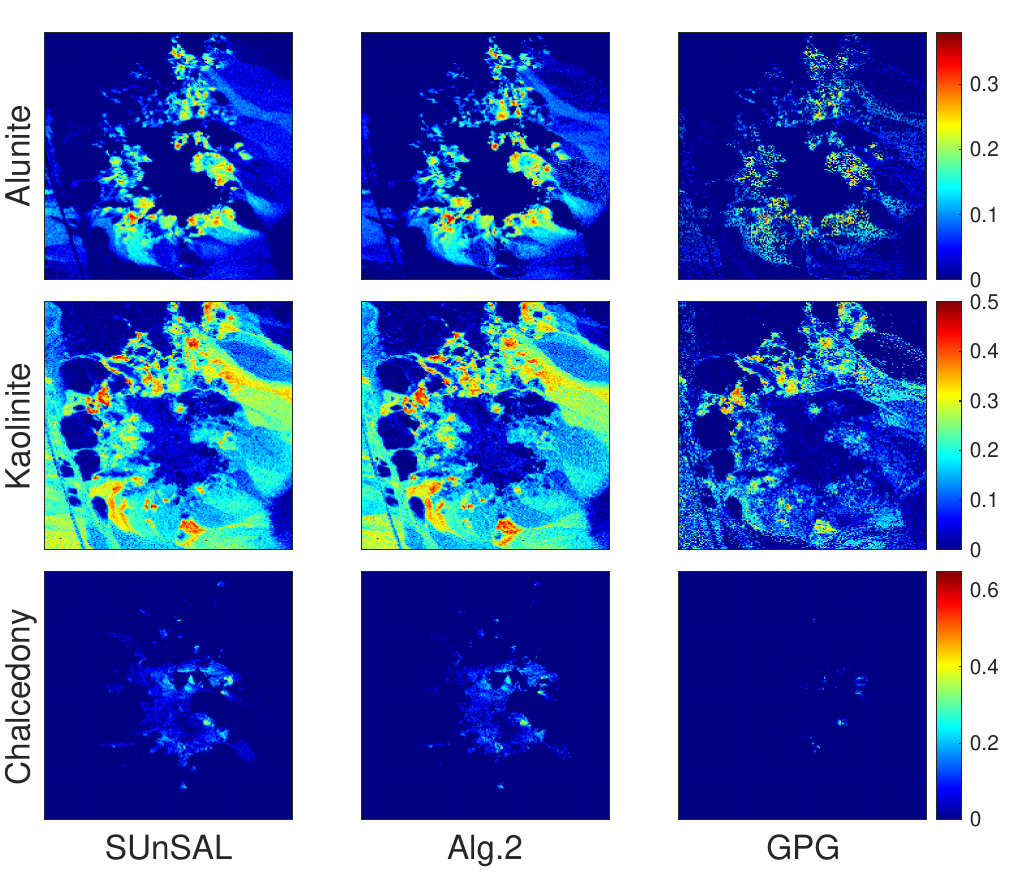}
    \end{minipage}
    \caption{(Left) Geological Reference Map~\cite{rasti2024image}. (Right) Estimated abundance fraction maps using different methods. }
    \label{fig: hyperspectral unmixing}
\end{figure}

We set $\lambda = 5$, $\varepsilon_1 = \varepsilon_2 = 10^{-6}$ in Algorithm~\ref{alg}, $\lambda = 10^{-3}$ in SUnSAL, and $\lambda_0 = 10^{-3}$, $\texttt{ITmax} = 2000$, $\gamma_2 = 10^{-4}$, $\rho_2 = 0.5$, $\texttt{Tol} = 10^{-4}$ in GPG. Figure~\ref{fig: hyperspectral unmixing} illustrates the abundance maps for three representative minerals: alunite, kaolinite, and chalcedony. We can infer that our Algorithm~\ref{alg} exhibits high similarity to SUnSAL and the Geological Reference Map, while $\m{X}^*$ generated by GPG appears to be noise. In addition, it is worth mentioning that our Algorithm~\ref{alg} generates the most sparse solution, as outlined in Table~\ref{tab: summary of estimated X*}. 

\begin{table}[t]
\centering
    \begin{tabular}{|c|c|c|c|}
        \hline
        \multirow{2}*{Summary} & \multicolumn{3}{c|}{Estimated abundance matrix $\m{X}^*$} \\
        \cline{2-4}
        ~ & SUnSAL & Alg.~\ref{alg}  & GPG\\
        \hline
        $\min_i \{\|\boldsymbol{x}^*_i\|_0\}$ & 10 & \textbf{2} & 19\\
        \hline
        $\max_i \{\|\boldsymbol{x}^*_i\|_0\}$ & 32 & \textbf{18} & 494\\
        \hline
        $\frac{1}{n} \sum_{i=1}^m \|\boldsymbol{x}^*_i\|_0$ & 18.031 & \textbf{9.0989} & 191.63\\
        \hline
    \end{tabular}
    \caption{Summary of estimated $\m{X}^*$ using different methods.}
    \label{tab: summary of estimated X*}
\end{table}

\end{experiment}

\begin{experiment}
(\textbf{portfolio optimization}) We consider the following optimization problem:
\begin{equation}
\begin{aligned}
\label{portfolio opt}
    \min_{\boldsymbol{x}\in\R^n} \quad & \frac{1}{2} \eta \left( \boldsymbol{x}^\top \m{\Sigma} \boldsymbol{x}\right) - (1-\eta)\left(\boldsymbol{\mu}^\top \boldsymbol{x}\right)\\
    \text{subject to } \quad  & {\color{blue}\mathbf{1}_n}^\top \boldsymbol{x} = 1, \boldsymbol{x} \geq 0,
\end{aligned}
\end{equation}
where $\boldsymbol{\mu}$ and $\m{\Sigma}$ are the expected return and the covariance matrix of the $n$ assets, respectively. Portfolio optimization aims to maximize the expected return ($\boldsymbol{\mu}^\top \boldsymbol{x}$) while minimizing the risk ($\boldsymbol{x}^\top \m{\Sigma} \boldsymbol{x}$), and the risk aversion parameter $\eta$ balances both objectives. By varying $\eta\in[0,1]$, the optimization problem returns different portfolios that form the efficient frontier in the context of Markowitz's theory~\cite{markowits1952portfolio}. There are mainly two types of efficient frontiers~\cite{fernandez2007portfolio}: the standard efficient frontier, which solves the problem~\eqref{portfolio opt}, and the general efficient frontier, which solves the same problem with an additional sparsity constraint $\|\bsx\|_0 \leq K$ for a given $K$.

We use the benchmark datasets for portfolio optimization from the OR-Library\footnote{\url{http://people.brunel.ac.uk/~mastjjb/jeb/orlib/portinfo.html}}. The datasets have weekly prices of some assets from five financial markets (Hang Seng in Hong Kong, DAX 100 in Germany, FTSE 100 in the UK, S\&P 100 in the USA, and Nikkei 225 in Japan) between March 1992 and September 1997. The numbers of assets in the five markets were 31, 85, 89, 98, and 225, respectively. We choose 2000 and 50 evenly spaced $\eta$ values for the standard efficient frontiers (SEF) and the general efficient frontiers (GEF), respectively. We also set $K = 10$ in the general efficient frontier. To compare these two frontiers, we employ three criteria: mean Euclidean distance, variance of return (risk) error, and mean return error~\cite{yin2015novel,cura2009particle}. These three metrics describe the overall distance between these two frontiers, the risk's relative error, and the mean return's relative error. They are referred to as distance, variance, and mean, respectively, in Table \ref{portfolio}.

Table \ref{portfolio} demonstrates that the general efficient frontier closely approximates the standard efficient frontier, as indicated by the low values of mean Euclidean distance, variance of return error, and mean return error. We also plot the general and standard efficient frontier in Figure~\ref{frontiers}, which visualizes the distance between these two frontiers. In these figures, the points with sparsity 10 are still very close to the curve without sparsity. 

\begin{table}
	\centering
	\begin{tabular}{cccccc}
		\hline
        index & Hang Seng & DAX 100 & FTSE 100 & S\&P 100 & Nikkei \\
        \hline
        assets & 31 & 85 & 89 & 98 & 225 \\
        distance ($\times 10^{-6}$) & $1.683$ & $1.311$ & $1.269$ & $9.448$ & $1.583$\\
        variance (\%) & 0.058 & 0.251 & 0.248 & 0.637 & 0.043\\
        mean (\%) & 0.0263 & 0.027 & 0.025 & 0.527 & 1.970\\
        time (s) & 0.397 & 0.549 & 0.509 & 0.720 & 13.761\\
        \hline
	\end{tabular}
    \caption{The numerical results for sparse portfolio optimization problem. The last row is the total time for calculating these two frontiers.}
    \label{portfolio}
\end{table}

\begin{figure}
\centering
  \begin{subfigure}[t]{0.3\linewidth}
    \centering
    \includegraphics[scale=0.2]{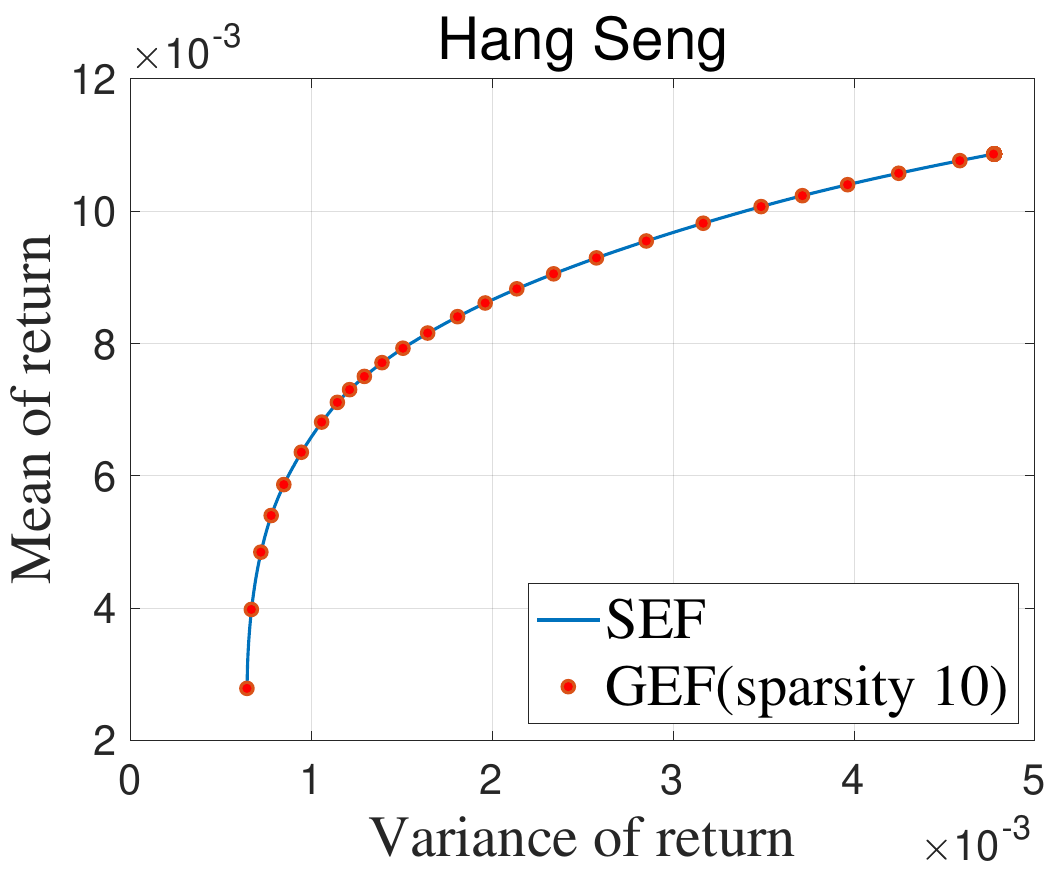}
    \caption{}
    \label{HangSeng}
  \end{subfigure}%
  \hspace{20mm}
  \begin{subfigure}[t]{0.3\linewidth}
    \centering
    \includegraphics[scale=0.2]{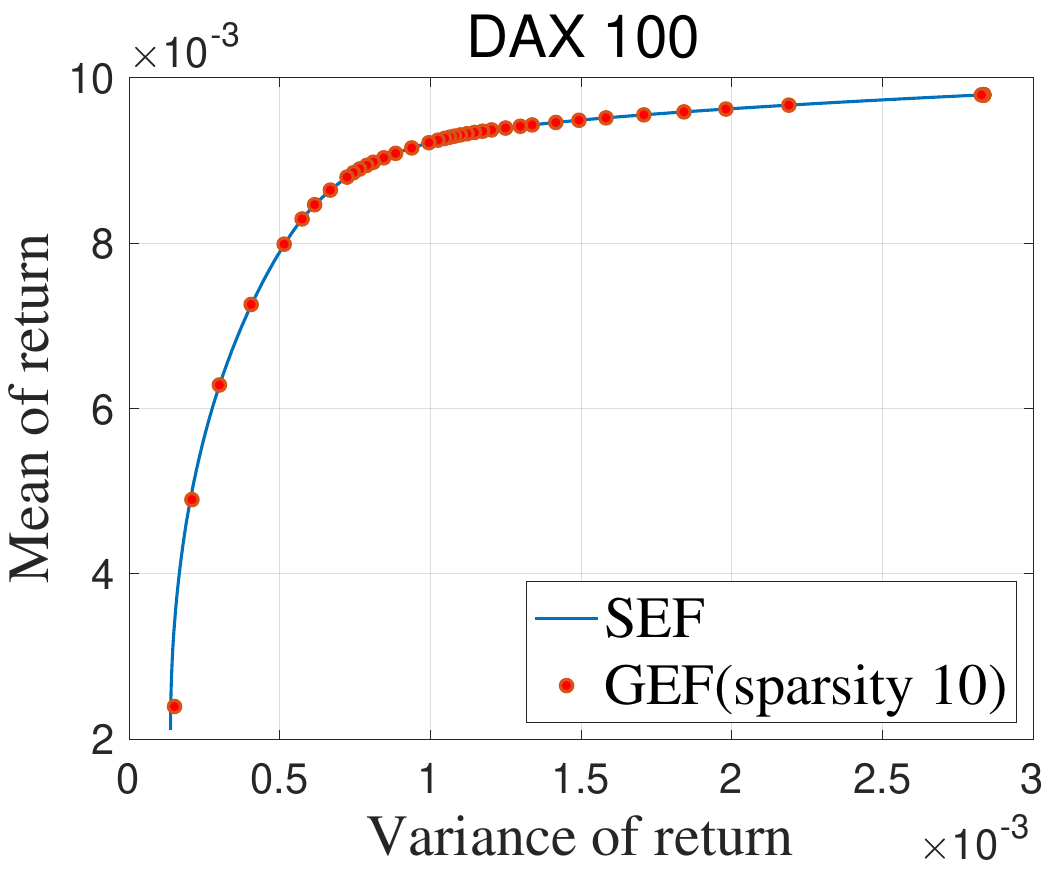}
    \caption{}
    \label{DAX100}
  \end{subfigure}    
    
    \quad
   \begin{subfigure}[t]{0.3\linewidth}
    \centering
    \includegraphics[scale=0.2]{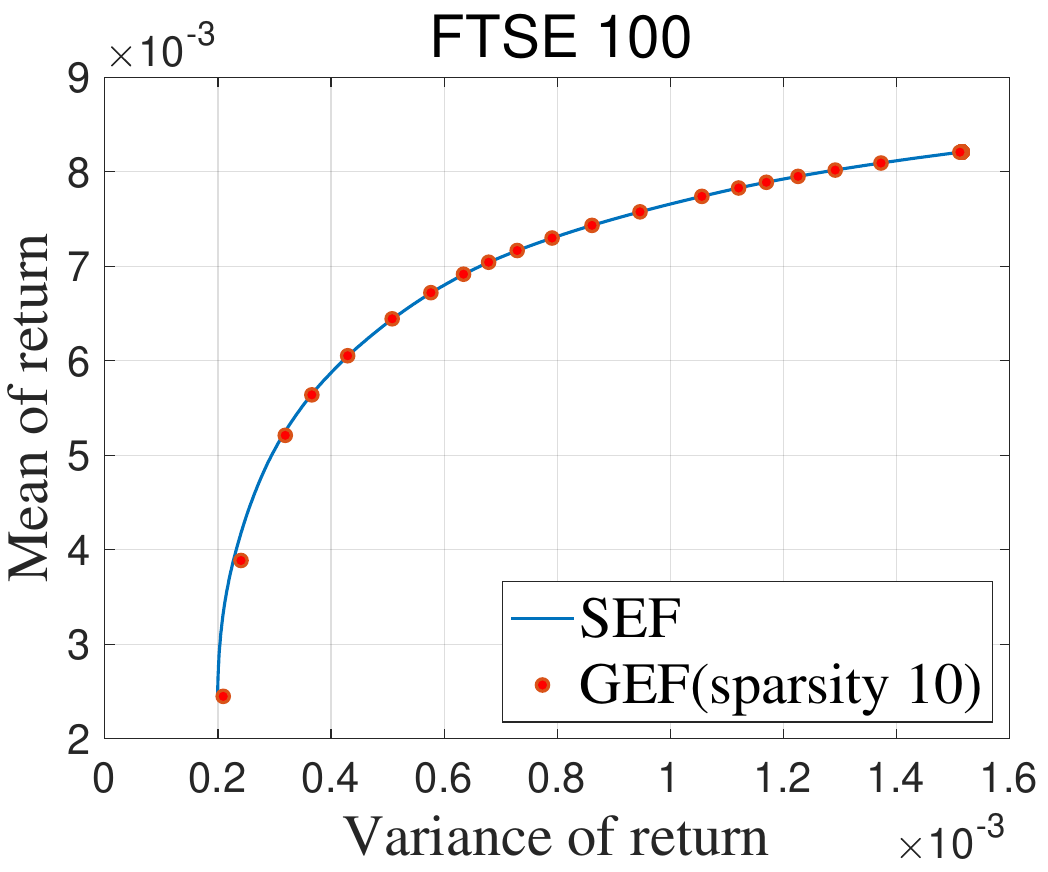}
    \caption{}
    \label{FTSE100}
  \end{subfigure}%
  \hspace{1mm}
  \begin{subfigure}[t]{0.3\linewidth}
    \centering
    \includegraphics[scale=0.2]{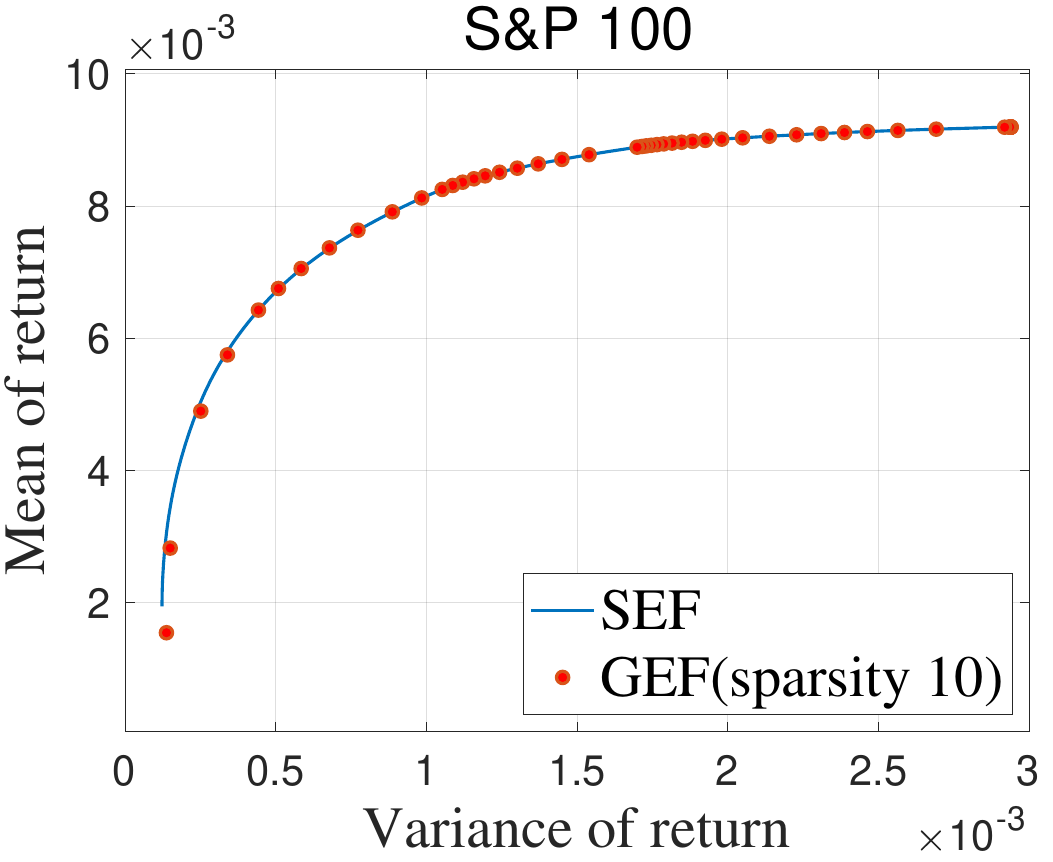}
    \caption{}
    \label{SP100}
  \end{subfigure}
  \hspace{1mm}
  \begin{subfigure}[t]{0.3\linewidth}
    \centering
    \includegraphics[scale=0.2]{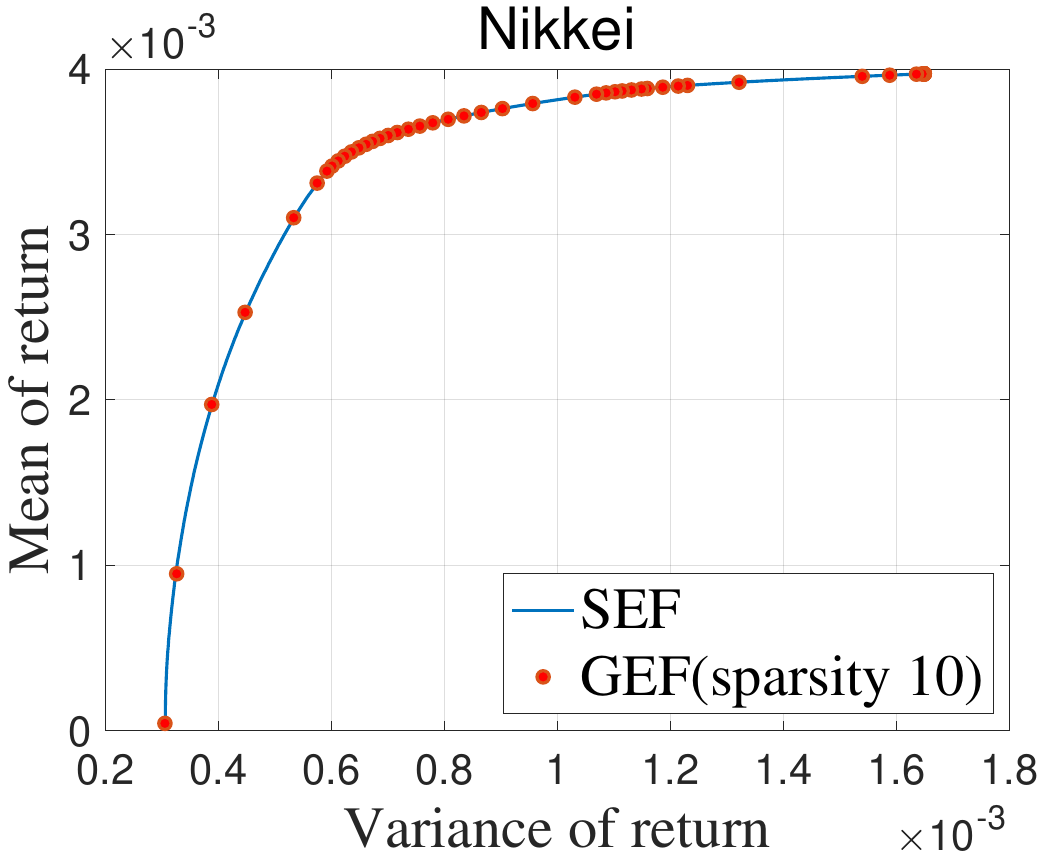}
    \caption{}
    \label{Nikkei}
  \end{subfigure}
\caption{(a) Efficient frontier for Hang Seng. (b) Efficient frontier for DAX 100. (c) Efficient frontier for FTSE 100. (d) Efficient frontier for S\&P 100. (e) Efficient frontier for Nikkei.}
\label{frontiers}
\end{figure}

\end{experiment}

\section{Conclusion}
This paper addresses the $\ell_0$-sparse optimization problem subject to a probability simplex constraint. We introduce an innovative algorithm that leverages the Bregman proximal gradient method to progressively induce sparsity by explicitly solving the associated subproblems. Our work includes a rigorous convergence analysis of this proposed algorithm, demonstrating its capability to reach a local minimum with a convergence rate of $O(1/k)$. Additionally, the empirical results illustrate the superior performance of the proposed algorithm. Finally, Future work will delve into strategies for reintroducing important elements that have been set to zero during the algorithmic process and the design of an adaptive regularized parameter.



\section*{Declarations}
\textbf{Acknowledgements} This work was partially supported by the Guangdong Key Laboratory of Mathematical Foundations for Artificial Intelligence 2023B1212010001 and Shenzhen Science and Technology Program ZDSYS20211021111415025. \\
\\
\textbf{Data availability} The data generated in Subsection~\ref{sec: synthetic} are available at \url{https://github.com/PanT12/Efficient-sparse-probability-measures-recovery-via-Bregman-gradient} . The data that support Subsection~\ref{sec: real} are openly available in \url{https://aviris.jpl.nasa.gov/data/free_data.html} and 
\url{http://people.brunel.ac.uk/~mastjjb/jeb/orlib/portinfo.html}.\\
\textbf{Conflict of interest} The authors declare that they have no conflict of interest.

\bibliographystyle{spmpsci}      

\begin{thebibliography}{10}
\providecommand{\url}[1]{{#1}}
\providecommand{\urlprefix}{URL }
\expandafter\ifx\csname urlstyle\endcsname\relax
  \providecommand{\doi}[1]{DOI~\discretionary{}{}{}#1}\else
  \providecommand{\doi}{DOI~\discretionary{}{}{}\begingroup
  \urlstyle{rm}\Url}\fi

\bibitem{auslender2006interior}
Auslender, A., Teboulle, M.: Interior gradient and proximal methods for convex
  and conic optimization.
\newblock SIAM Journal on Optimization \textbf{16}(3), 697--725 (2006)

\bibitem{bauschke2017descent}
Bauschke, H.H., Bolte, J., Teboulle, M.: A descent lemma beyond {L}ipschitz
  gradient continuity: first-order methods revisited and applications.
\newblock Mathematics of Operations Research \textbf{42}(2), 330--348 (2017)

\bibitem{beck2003mirror}
Beck, A., Teboulle, M.: Mirror descent and nonlinear projected subgradient
  methods for convex optimization.
\newblock Operations Research Letters \textbf{31}(3), 167--175 (2003)

\bibitem{ben2001ordered}
Ben-Tal, A., Margalit, T., Nemirovski, A.: The ordered subsets mirror descent
  optimization method with applications to tomography.
\newblock SIAM Journal on Optimization \textbf{12}(1), 79--108 (2001)

\bibitem{bertsimas2022scalable}
Bertsimas, D., Cory-Wright, R.: A scalable algorithm for sparse portfolio
  selection.
\newblock Informs journal on computing \textbf{34}(3), 1489--1511 (2022)

\bibitem{bioucas2010alternating}
Bioucas-Dias, J.M., Figueiredo, M.A.: Alternating direction algorithms for
  constrained sparse regression: Application to hyperspectral unmixing.
\newblock In: 2010 2nd Workshop on Hyperspectral Image and Signal Processing:
  Evolution in Remote Sensing, pp. 1--4. IEEE (2010)

\bibitem{birnbaum2011distributed}
Birnbaum, B., Devanur, N.R., Xiao, L.: Distributed algorithms via gradient
  descent for {F}isher markets.
\newblock In: Proceedings of the 12th ACM conference on Electronic commerce,
  pp. 127--136 (2011)

\bibitem{blumensath2008iterative}
Blumensath, T., Davies, M.E.: Iterative thresholding for sparse approximations.
\newblock Journal of Fourier analysis and Applications \textbf{14}, 629--654
  (2008)

\bibitem{blumensath2009iterative}
Blumensath, T., Davies, M.E.: Iterative hard thresholding for compressed
  sensing.
\newblock Applied and computational harmonic analysis \textbf{27}(3), 265--274
  (2009)

\bibitem{blumensath2010normalized}
Blumensath, T., Davies, M.E.: Normalized iterative hard thresholding:
  {G}uaranteed stability and performance.
\newblock IEEE Journal of selected topics in signal processing \textbf{4}(2),
  298--309 (2010)

\bibitem{bolte2018first}
Bolte, J., Sabach, S., Teboulle, M., Vaisbourd, Y.: First order methods beyond
  convexity and {L}ipschitz gradient continuity with applications to quadratic
  inverse problems.
\newblock SIAM Journal on Optimization \textbf{28}(3), 2131--2151 (2018)

\bibitem{boyd2004convex}
Boyd, S.P., Vandenberghe, L.: Convex optimization.
\newblock Cambridge university press (2004)

\bibitem{cura2009particle}
Cura, T.: Particle swarm optimization approach to portfolio optimization.
\newblock Nonlinear analysis: Real world applications \textbf{10}(4),
  2396--2406 (2009)

\bibitem{eckstein1993nonlinear}
Eckstein, J.: Nonlinear proximal point algorithms using {B}regman functions,
  with applications to convex programming.
\newblock Mathematics of Operations Research \textbf{18}(1), 202--226 (1993)

\bibitem{esmaeili2016l}
Esmaeili~Salehani, Y., Gazor, S., Kim, I.M., Yousefi, S.: $\ell_0$-norm sparse
  hyperspectral unmixing using arctan smoothing.
\newblock Remote Sensing \textbf{8}(3), 187 (2016)

\bibitem{fernandez2007portfolio}
Fern{\'a}ndez, A., G{\'o}mez, S.: Portfolio selection using neural networks.
\newblock Computers \& operations research \textbf{34}(4), 1177--1191 (2007)

\bibitem{fornasier2015compressive}
Fornasier, M., Rauhut, H.: Compressive sensing.
\newblock Handbook of mathematical methods in imaging \textbf{1}, 187--229
  (2015)

\bibitem{guo2021modified}
Guo, Z., Min, A., Yang, B., Chen, J., Li, H.: A modified huber nonnegative
  matrix factorization algorithm for hyperspectral unmixing.
\newblock IEEE Journal of Selected Topics in Applied Earth Observations and
  Remote Sensing \textbf{14}, 5559--5571 (2021)

\bibitem{hanzely2021accelerated}
Hanzely, F., Richtarik, P., Xiao, L.: Accelerated {B}regman proximal gradient
  methods for relatively smooth convex optimization.
\newblock Computational Optimization and Applications \textbf{79}, 405--440
  (2021)

\bibitem{ince2022fast}
Ince, T., Dobigeon, N.: Fast hyperspectral unmixing using a multiscale sparse
  regularization.
\newblock IEEE Geoscience and Remote Sensing Letters \textbf{19}, 1--5 (2022)

\bibitem{iordache2009unmixing}
Iordache, M.D., Bioucas-Dias, J., Plaza, A.: Unmixing sparse hyperspectral
  mixtures.
\newblock In: 2009 IEEE International Geoscience and Remote Sensing Symposium,
  vol.~4, pp. IV--85. IEEE (2009)

\bibitem{jiang2023bregman}
Jiang, X., Vandenberghe, L.: Bregman three-operator splitting methods.
\newblock Journal of Optimization Theory and Applications \textbf{196}(3),
  936--972 (2023)

\bibitem{krichene2015accelerated}
Krichene, W., Bayen, A., Bartlett, P.L.: Accelerated mirror descent in
  continuous and discrete time.
\newblock Advances in neural information processing systems \textbf{28} (2015)

\bibitem{lu2018relatively}
Lu, H., Freund, R.M., Nesterov, Y.: Relatively smooth convex optimization by
  first-order methods, and applications.
\newblock SIAM Journal on Optimization \textbf{28}(1), 333--354 (2018)

\bibitem{ma2011fixed}
Ma, S., Goldfarb, D., Chen, L.: Fixed point and {B}regman iterative methods for
  matrix rank minimization.
\newblock Mathematical Programming \textbf{128}(1-2), 321--353 (2011)

\bibitem{majumdar2016impulse}
Majumdar, A., Ansari, N., Aggarwal, H., Biyani, P.: Impulse denoising for
  hyper-spectral images: A blind compressed sensing approach.
\newblock Signal Processing \textbf{119}, 136--141 (2016)

\bibitem{markowits1952portfolio}
Markowits, H.M.: Portfolio selection.
\newblock Journal of finance \textbf{7}(1), 71--91 (1952)

\bibitem{natarajan1995sparse}
Natarajan, B.K.: Sparse approximate solutions to linear systems.
\newblock SIAM journal on computing \textbf{24}(2), 227--234 (1995)

\bibitem{nemirovskij1983problem}
Nemirovskij, A.S., Yudin, D.B.: Problem complexity and method efficiency in
  optimization  (1983)

\bibitem{pan2017convergent}
Pan, L., Zhou, S., Xiu, N., Qi, H.D.: A convergent iterative hard thresholding
  for nonnegative sparsity optimization.
\newblock Pacific Journal of Optimization \textbf{13}(2), 325--353 (2017)

\bibitem{rasti2024image}
Rasti, B., Zouaoui, A., Mairal, J., Chanussot, J.: Image processing and machine
  learning for hyperspectral unmixing: An overview and the hysupp python
  package.
\newblock IEEE Transactions on Geoscience and Remote Sensing  (2024)

\bibitem{rogass2014reduction}
Rogass, C., Mielke, C., Scheffler, D., Boesche, N.K., Lausch, A., Lubitz, C.,
  Brell, M., Spengler, D., Eisele, A., Segl, K., et~al.: Reduction of
  uncorrelated striping noise—applications for hyperspectral pushbroom
  acquisitions.
\newblock Remote Sensing \textbf{6}(11), 11082--11106 (2014)

\bibitem{salehani2014sparse}
Salehani, Y.E., Gazor, S., Kim, I.M., Yousefi, S.: Sparse hyperspectral
  unmixing via arctan approximation of {L}0 norm.
\newblock In: 2014 IEEE Geoscience and Remote Sensing Symposium, pp.
  2930--2933. IEEE (2014)

\bibitem{tang2014sparse}
Tang, W., Shi, Z., Duren, Z.: Sparse hyperspectral unmixing using an
  approximate {L}0 norm.
\newblock Optik \textbf{125}(1), 31--38 (2014)

\bibitem{tibshirani1996regression}
Tibshirani, R.: Regression shrinkage and selection via the lasso.
\newblock Journal of the Royal Statistical Society Series B: Statistical
  Methodology \textbf{58}(1), 267--288 (1996)

\bibitem{xiao2022geometric}
Xiao, G., Bai, Z.J.: A geometric proximal gradient method for sparse least
  squares regression with probabilistic simplex constraint.
\newblock Journal of Scientific Computing \textbf{92}(1), 22 (2022)

\bibitem{xu2011image}
Xu, L., Lu, C., Xu, Y., Jia, J.: Image smoothing via {L}0 gradient
  minimization.
\newblock In: Proceedings of the 2011 SIGGRAPH Asia conference, pp. 1--12
  (2011)

\bibitem{yin2015novel}
Yin, X., Ni, Q., Zhai, Y.: A novel {PSO} for portfolio optimization based on
  heterogeneous multiple population strategy.
\newblock In: 2015 IEEE Congress on Evolutionary Computation (CEC), pp.
  1196--1203. IEEE (2015)

\bibitem{zhang2019learning}
Zhang, J.Y., Khanna, R., Kyrillidis, A., Koyejo, O.O.: Learning sparse
  distributions using iterative hard thresholding.
\newblock Advances in Neural Information Processing Systems \textbf{32} (2019)

\bibitem{zhang2023sparse}
Zhang, P., Xiu, N., Qi, H.D.: Sparse {SVM} with hard-margin loss: a
  {N}ewton-augmented lagrangian method in reduced dimensions.
\newblock arXiv preprint arXiv:2307.16281  (2023)

\bibitem{zhao2022lagrange}
Zhao, C., Xiu, N., Qi, H., Luo, Z.: A {L}agrange--{N}ewton algorithm for sparse
  nonlinear programming.
\newblock Mathematical Programming \textbf{195}(1-2), 903--928 (2022)

\bibitem{zhou2021global}
Zhou, S., Xiu, N., Qi, H.D.: Global and quadratic convergence of {N}ewton
  hard-thresholding pursuit.
\newblock The Journal of Machine Learning Research \textbf{22}(1), 599--643
  (2021)

\bibitem{zou2018restoration}
Zou, C., Xia, Y.: Restoration of hyperspectral image contaminated by poisson
  noise using spectral unmixing.
\newblock Neurocomputing \textbf{275}, 430--437 (2018)

\end{thebibliography}


\end{document}